\documentclass[a4paper, 12pt]{article}

\usepackage[T2A]{fontenc}
\usepackage[utf8]{inputenc}
\usepackage[english]{babel}
\usepackage{amsfonts}
\usepackage{lipsum}
\usepackage{authblk}
\usepackage{amsmath,amssymb,amsthm}
\usepackage{graphicx}
\usepackage{abstract}
\usepackage{subcaption}
\usepackage{wrapfig}
\usepackage[hmargin=2cm,vmargin=2cm]{geometry}

\newtheorem{prop}{Proposition}
\newtheorem{theo}{Theorem}
\newtheorem{hypo}{Conjecture}

\newtheorem{defin}{Definition}
\newtheorem{corollary}{Corollary}
\newtheorem{lemma}{Lemma}

\begin{document}

\title{On the~First Caustic of Elliptical Billiards}
\author[1, 2]{Aleksandra Uskova}
\affil[1]{National Research University Higher School of Economics, Moscow, Russia}
\affil[2]{Université Toulouse III - Paul Sabatier, Toulouse, France}

\maketitle

\thispagestyle{empty}
\setlength{\parindent}{0pt}
\setlength{\parskip}{5pt}

\begin{abstract}
    \noindent A~point source of light is placed inside a~billiard with a~smooth, convex, closed boundary. For any integer $n$, the~$n$-th caustic by reflection, denoted by $\Gamma_n$, is the~envelope of light rays that have undergone $n$ reflections in such a~billiard after emanating from the~source. 

    \noindent It has been conjectured by Gil Bor and Serge Tabachnikov that for an~elliptical billiard, $\Gamma_n$ has exactly four ordinary cusps; this problem is a~billiard variation of Jacobi's Last Geometric Statement, which concerns the~number of cusps in the~conjugate locus of a~point on an~ellipsoid. Gil Bor, Mark Spivakovsky, and Serge Tabachnikov have proven that $\Gamma_n$ has at least four ordinary cusps. In this paper, we present a~proof that $\Gamma_1$ has exactly four ordinary cusps, using billiards in complex spaces.

    \noindent \textit{\textbf{Key words and phrases.} Elliptical billiard, light source, caustic by reflection, cusps, Jacobi's Last Geometric Statement, complex billiard.}   
\end{abstract}

\tableofcontents


\newpage

\section{Introduction}

The~theorem, known as Jacobi's Last Geometric Statement or Jacobi's conjecture, states that any caustic (the~locus of the~first conjugate points) from any point on an~ellipsoid other than umbilical points has exactly four cusps. The~statement was rigorously proven only at~the~beginning of this century (see \cite{Jacobi}), and is still an~object of interest to researchers. In~particular, whether Jacobi's hypothesis is correct for the~$n$-th caustic (geometric locations of the~$n$-th conjugate points) for $n > 1$ remains open.

Around 2021, Serge Tabachnikov and Gil Bor reformulated the~problem in the~setting of billiards. In their article \cite{Bor-Tab}, they established the~following result.

\begin{defin}
    Let $\gamma$ be an~oval (a~smooth convex closed curve in $\mathbb{R}^2$) and an~ideal mirror; let~$O$ be a~point inside $\gamma$ and a~light source. For any $n\in\mathbb{N}$ \textbf{the~$n$-th caustic by reflection}~$\Gamma_n$ is the~envelope of the~family of rays emanating from $O$ that have undergone $n$ reflections in~$\gamma$.
\end{defin}

\begin{theo}[Bor, Tabachnikov]
    For each oval $\gamma\subset\mathbb{R}^2$ with a~general light source inside $\gamma$ and $n\geqslant 1$, the~$n$-th caustic by reflection $\Gamma_n\subset\mathbb{R}\mathrm{P}^2$ has at least four (ordinary) cusps.
\end{theo}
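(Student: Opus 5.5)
The plan is to recast the caustic as the envelope of a one-parameter family of lines, and then read its cusps as sign changes of a smooth function on the circle of directions. Let $\varphi\in S^1$ be the direction of the ray leaving $O$. After $n$ reflections in $\gamma$ this ray becomes an oriented line $\ell_n(\varphi)$. Since $\gamma$ is strictly convex and smooth, the billiard map is a smooth diffeomorphism of the annulus of oriented lines meeting the interior of $\gamma$. Hence $\varphi\mapsto\ell_n(\varphi)$ is a smooth closed curve in the space of oriented lines. We write it in support coordinates: $\ell_n(\varphi)=\{x\cos\theta+y\sin\theta=p\}$, where $\theta=\theta(\varphi)$ and $p=p(\varphi)$. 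The envelope $\Gamma_n$ is the point $x\cos\theta+y\sin\theta=p$, $-x\sin\theta+y\cos\theta=p'$, with derivatives taken in $\theta$. Where $\theta'(\varphi)=0$ the envelope point lies at infinity, which is why we work in $\mathbb{R}\mathrm{P}^2$. Cusps of $\Gamma_n$ then correspond to zeros of the radius of curvature $\rho=p+p''$, and ordinary cusps to simple zeros. The task is therefore to show that $\rho$, suitably interpreted at points at infinity, has at least four simple sign changes on the circle.

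For the key steps I would use the projective-dual (wave front) picture. The family of lines $\ell_n(\varphi)$ is a closed curve $\Gamma_n^*$ in the dual plane. Cusps of $\Gamma_n$ correspond to inflection points of $\Gamma_n^*$, and conversely. I would first fix the topology: as $\varphi$ goes once around $S^1$, the direction $\theta(\varphi)$ of $\ell_n$ makes some total turn. The claim is that this turn is $2\pi$ times the turn of the initial family, because the billiard map is isotopic to the identity on the annulus. Then I would invoke a four-vertex type statement. A smooth closed curve that is a front whose dual has the right rotation index, and which is not ``convex'' in the appropriate sense, must have at least four inflections. This is the Möbius theorem: a noncontractible closed curve in $\mathbb{R}\mathrm{P}^2$ has at least three inflections. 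It has a refinement, via the Sturm–Hurwitz theorem, giving four when the curve is the projectivization of a function with vanishing first harmonics.

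Concretely, I would aim to show that $\rho(\theta)$, viewed as a function on the circle, is $L^2$-orthogonal to $1,\cos\theta,\sin\theta$ after a suitable reparametrization. Orthogonality to $\cos\theta$ and $\sin\theta$ comes from closedness of the front. Orthogonality to $1$ should come from a length or phase-shift argument: the wave front emanating from $O$ returns with the same total signed length. Sturm–Hurwitz then yields at least four sign changes. Genericity of $O$ makes them simple zeros, and hence ordinary cusps.

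The main obstacle is the vanishing of the constant term, together with the treatment of lines through infinity. For $n\geq1$, the curve $\Gamma_n$ has branches going to infinity where $\theta'(\varphi)=0$, so the support-function parametrization by $\theta$ is not global. I would first try to work entirely in $\mathbb{R}\mathrm{P}^2$ with the Möbius-type argument, counting inflections of the dual curve in the sphere of oriented lines. There the relevant curve is a closed curve on $S^2$ that is centrally symmetric or not contained in any hemisphere. Segre's / Ghys's four-vertex theorem for such spherical curves, which says that a curve not in an open hemisphere has at least four inflections, would give the bound. So the key lemma to verify is that the lifted family $\ell_n(\varphi)\in S^2$ is not contained in any open hemisphere. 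This should follow because the rays through $O$, and their images under the billiard map, sweep every direction and every side of every point.
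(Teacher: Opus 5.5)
The paper gives no proof of this statement; it quotes it from Bor--Tabachnikov. So I judge your outline on its own terms.

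\textbf{The framework is sound.} Your support-function route cannot be made global, as you note. The spherical route is the right frame:
the oriented $n$-fold reflected rays form a closed curve $\widetilde\Gamma=T^n(S_O)$ on the sphere of oriented lines;
pencils of lines through points, finite or at infinity, are great circles;
cusps of $\Gamma_n$, including those at infinity, are the inflections of $\widetilde\Gamma$.

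\textbf{The four-inflection theorem is misquoted.} As you state it, it is false: Segre's theorem needs the spherical curve to be \emph{simple}. For example, take $c(\theta)=(\cos\theta,\sin\theta,f(\theta))$ with $f=\epsilon\sin(\theta/2)$, $0\le\theta\le 4\pi$. Its radial projection lies near a doubly traversed great circle and meets every great circle. But its inflections are the zeros of $\det(c,c',c'')=f+f''=\tfrac34\epsilon\sin(\theta/2)$, so there are only two. In your situation $\widetilde\Gamma$ is simple, because $T$ is injective on oriented lines and $S_O$ is a great circle. You must say so, since the theorem depends on it.

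\textbf{The main gap is the key lemma that $\widetilde\Gamma$ meets every great circle.} This lemma carries the whole content of the theorem, and the reason you give only restates it. Your ``every direction'' argument handles only the great circles coming from pencils of parallel lines. Indeed, $T$ is the identity on the boundary of the annulus $A$ of oriented lines meeting $\gamma$, so $\widetilde\Gamma$ is homotopic to $S_O$ in $A$ and the ray direction has degree one. For a finite point $X$, however, nothing in your outline prevents all $n$-fold reflected rays from passing strictly on one side of $X$.

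\textbf{The missing idea is area preservation.} The billiard map preserves the area form $d\theta\wedge dp$ on $A$ and fixes both boundary circles. The involution $(\theta,p)\mapsto(\theta+\pi,-p)$, which reverses the orientation of lines, preserves $A$, area, and every pencil $S_X$, and it swaps the two sides of $S_X$. Hence each pencil $S_X$ splits $A$ into two parts of equal area. Since $T^n$ fixes the upper boundary circle, it carries the part of $A$ above $S_O$ onto the part above $\widetilde\Gamma$, so that part also has half the area. Now suppose $\widetilde\Gamma$ lay strictly on one side of $S_X$, say below it. Then the part of $A$ above $S_X$ would be contained in the part of $A$ above $\widetilde\Gamma$, with a nonempty open set to spare. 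This contradicts the equality of areas.

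The same invariant gives $\int p\,d\theta=0$ in your graph case. With this lemma and simplicity, Segre's theorem yields at least four inflections. You still need a transversality argument showing that, for generic $O$, these inflections are nondegenerate and hence give ordinary cusps.
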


In the~same work, a~natural billiard analogue of Jacobi's conjecture was also formulated.

\begin{hypo}[Bor, Tabachnikov]
    If $\gamma$ is an~ellipse, then the~$n$-th caustic by reflection $\Gamma_n$ for a~light source inside $\gamma$ and different from a~focus has exactly four (ordinary) cusps for every $n \ge 1$.
\end{hypo}

In 2024, Gil Bor, Serge Tabachnikov, and Mark Spivakovsky made progress toward solving the~problem for elliptical billiards. In their paper \cite{Bor-Sp-Tab}, they were able to strengthen the~previous result and determine the~exact locations of the~four cusps.

\begin{theo}[Bor, Spivakovsky, Tabachnikov]
    Let $O$ be a~non-focal point inside an~ellipse $C$, and let $E$ and $H$ be the~ellipse and hyperbola~(respectively), passing through $O$ and confocal to~$C$. Consider the~four rays emanating from $O$ and tangent to $E$ and $H$ (two per each). Then, after $n$ reflections, the~four rays are tangent to $E$ and $H$ at four points which are cusps of the~$n$-th caustic by reflection from $O$.
\end{theo}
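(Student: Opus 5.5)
The plan rests on integrability: the rays from $O$ tangent to a confocal conic stay tangent to it, and the reflection maps are equivariant under the symmetries of the confocal structure. We use Joachimsthal integrability of the elliptical billiard. Every line (ray) inside $C$ is tangent to a unique conic of the confocal family $\{C_\lambda\}$, and this conic is preserved under reflection in $C$. The rays from $O$ are parametrized by their direction angle $\varphi\in S^1$. The caustic parameter $\lambda(\varphi)$ of the ray is a smooth function of $\varphi$, and its critical points are exactly the four directions in the statement. The maxima and minima of $\lambda$ occur where the ray is tangent to the conics through $O$, namely $E$ (two directions) and $H$ (two directions). Indeed, $\lambda$ is critical precisely when the ray is tangent to $C_\lambda$ at $O$ itself, and the only confocal conics through $O$ are $E$ and $H$.

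First step: we show that the $n$-th reflected ray of a tangent-to-$E$ ray is tangent to $E$. This follows from the invariance of the caustic. The remaining task is to locate the tangency point and show it is a cusp. We use the family of lines $\ell_n(\varphi)$, the $n$-th reflected rays, and compute their envelope $\Gamma_n$. The envelope point on $\ell_n(\varphi)$ is determined by the derivative $d\ell_n/d\varphi$. At $\varphi_0$, where $\lambda'(\varphi_0)=0$, the family $\ell_n(\varphi)$ is to first order a family of lines tangent to the fixed conic $E$. Near $\varphi_0$ we write $\ell_n(\varphi)$ as a tangent line to $C_{\lambda(\varphi)}$ at a point with a Liouville/elliptic coordinate $t(\varphi)$. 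Since $\lambda'(\varphi_0)=0$, the first-order variation is just sliding along $E$. Hence the envelope point at $\varphi_0$ is the tangency point with $E$, and $\Gamma_n$ touches $E$ there.

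For the cusp, we use elliptic coordinates $(\lambda,t)$ on the space of lines. There the billiard map after $n$ reflections acts as a shift $t\mapsto t+ n\,\omega(\lambda)$ in action-angle variables. The curve of rays from $O$ is a smooth curve in $(\lambda,t)$-space, and the envelope is the image of the curve under the map sending a line to its point of tangency with the infinitesimally nearby line. For lines tangent to $C_\lambda$ with $\lambda$ varying, the characteristic point moves off the conic at a rate governed by $d\lambda/dt$ along the curve. We will show the following: at $\varphi_0$, the derivative of the envelope point vanishes, since $\lambda'=0$ and the tangency point moves along $E$ at a rate cancelled by the shift term. The second derivative is nonzero and the third derivative is transverse to it, which gives an ordinary (semicubical) cusp. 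Concretely, we parametrize the line by its support function in a local frame and use the criterion that a front point is an ordinary cusp iff $p+p''$ has a simple zero. Here $p$ is the support-type function of $\ell_n$ as a function of direction.

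The main obstacle is this cusp nondegeneracy computation: showing that the relevant quantity (essentially $\lambda''(\varphi_0)$ combined with the derivative of the shift $n\,\omega'(\lambda)$) is nonzero. Our first attempt is to reduce it to the sign of $\lambda''(\varphi_0)$. This is nonzero because $O$ is non-focal, so $E$ and $H$ are nondegenerate and the extrema of $\lambda$ are nondegenerate. We would then invoke the monotonicity of the rotation number $\omega(\lambda)$ (a twist condition) to ensure the two contributions add with the same sign rather than cancel.
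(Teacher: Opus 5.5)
\textbf{The paper gives no proof of this statement.} It is quoted from Bor--Spivakovsky--Tabachnikov and used only as a black box (to get $\kappa\geqslant 4$). So your argument has to stand on its own, and at the decisive point it does not.

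\textbf{What is fine.} Your preliminary steps hold up:
the critical directions of $\lambda(\varphi)$ are the tangents to $E$ and $H$ at $O$;
$\ell_n(\varphi_0)$ stays tangent to $E$;
since $\ell_n'(\varphi_0)\neq 0$ ($T^n$ is a local diffeomorphism of the space of lines) while $\lambda'(\varphi_0)=0$, the envelope point at $\varphi_0$ is the tangency point $P_n\in E$.

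\textbf{The gap.} You claim the envelope point has zero velocity at $\varphi_0$ ``because the sliding along $E$ is cancelled by the shift term''. The shift cannot do this, since $\frac{d}{d\varphi}\,n\omega(\lambda(\varphi))=n\omega'(\lambda)\lambda'(\varphi)$ vanishes at $\varphi_0$. What must cancel the sliding of the tangency point is the motion of the envelope point along $\ell_n(\varphi)$ relative to the point of tangency with $C_{\lambda(\varphi)}$. At $\varphi_0$ that motion is governed by $\lambda''(\varphi_0)$, and whether the two cancel is a second-order identity you never establish. Dually: $\Gamma_n^*=T^n(O^*)$ is tangent to the level curve $E^*$, and you must show it osculates the pencil through $P_n$ (a straight line of the dual plane) there, i.e. has an inflection.

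\textbf{One way to fill it.}
Write lines as $x\cos\theta+y\sin\theta=p$, so that $\lambda=p^2-a^2\cos^2\theta-b^2\sin^2\theta$.
The billiard map preserves $dp\wedge d\theta$ and $\lambda$. Hence it preserves the Hamiltonian field of $\lambda$, and therefore the $1$-form $dt=d\theta/(2p)$ on each level curve.
Consequently, for any curve in line space tangent to $E^*$, the number $Q=d^2\lambda/dt^2$ at the tangency point is $T^n$-invariant. The twist $\omega'$ plays no role at this order.
For the pencil through a point of $E$ one computes $Q=-8p_E^3R_E$, where $p_E$ and $R_E$ are the support function and radius of curvature of $E$. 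For a conic, $p_E^3R_E=(a^2+\lambda_E)(b^2+\lambda_E)$ is constant.
Hence $T^n(O^*)$ and the pencil through $P_n$ have the same $2$-jet. So $\Gamma_n$ is singular at $P_n$ (equivalently $q+q''=0$ there, with $q$ the support function of $\Gamma_n$). The same argument works for $H$.

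\textbf{Nondegeneracy is misdirected.} $\lambda''(\varphi_0)$ is exactly the datum consumed in producing the singularity, so it cannot also certify that the cusp is ordinary. Ordinarity is a third-order condition. It involves $n\omega'(\lambda_E)$ together with third-order data of the pencils at $P_0$ and $P_n$, and needs an actual computation, not a sign argument. In any case, the statement as quoted only asserts cusps.

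\textbf{Degenerate sources.} Non-focality gives $\lambda_E\neq\lambda_H$, not nondegeneracy of $E$ and $H$. If $O$ lies on an axis, one of them is the degenerate level $\lambda=-b^2$ or $\lambda=-a^2$. There the axis is a $2$-periodic orbit and action--angle coordinates do not exist. In that case the singularity instead follows from the symmetry of $\Gamma_n$ about that axis.
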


Thus, proving Conjecture 1 reduces to showing that the~four cusps described in Theorem 2 are the~only cusps of the~$n$-th caustic by reflection from $O$ and that all four cusps are ordinary. 

The~fact that the~first caustic by reflection in this setting has exactly four cusps was proven in~the~works of Bruce, Giblin, and Gibson (see, for example, \cite{BrGibGib}). In this paper, we also study the~characteristics of the~first caustic by reflection, including the~number of cusps, relying on techniques from algebraic geometry rather than methods from mathematical analysis and differential geometry. In order to be able to use the~tools of algebraic geometry, we consider a~complexification of the~problem. The~main results are Theorem 3 and Corollary 1.

\begin{theo}
    For every ellipse $\gamma~\subset \mathbb{C}\mathrm{P}^2$ other than a~circle and a~light source $O \in \mathbb{C}^2$ outside the~isotropic tangents to $\gamma$ and $\gamma$ itself, the~first caustic by reflection $\Gamma_1 \subset \mathbb{C}\mathrm{P}^2$ is an~algebraic curve with the~following values of the~numerical invariants: genus $g = 0$, degree $d = 6$, dual degree $d^* = 6$, number of ordinary cusps $\kappa~= 4$, number of inflection points $\kappa^*=4$, number of~double points $\delta~= 6$, number of bitangents $\delta^* = 6$.
\end{theo}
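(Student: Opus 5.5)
Our approach is to describe $\Gamma_1$ through its dual curve, which is much simpler than $\Gamma_1$ itself, and then recover all invariants from Plücker's formulas.

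\textbf{Step 1: the reflected pencil.} Rationally parametrize the ellipse, e.g.\ $\gamma=\{[1-t^2:2t:1+t^2]\}$ after an affine change of coordinates, or write $\gamma:\ x^2/a^2+y^2/b^2=1$ with $P(t)=(a\cos t,b\sin t)$ complexified. For each point $P(t)\in\gamma$, the reflected line $\ell(t)$ is the reflection of the line $OP(t)$ in the tangent to $\gamma$ at $P(t)$. Its coefficients are polynomial in $t$, after clearing the denominators coming from the normal vector. In the complex setting we define reflection via the symmetric bilinear form $x^2+y^2$. This requires the tangent to be non-isotropic, which is why the isotropic tangents to $\gamma$ are excluded and why a circle is excluded. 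The map $t\mapsto \ell(t)$ is a rational curve in the dual plane, and $\Gamma_1$ is its dual. Hence $g(\Gamma_1)=0$, because the dual of a rational curve is rational.

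\textbf{Step 2: the dual degree $d^*$.} The number $d^*$ is the number of reflected lines through a generic point $Q$, i.e.\ the number of $t$ with $Q\in\ell(t)$. Writing $\ell(t)=[A(t):B(t):C(t)]$, this is the degree of $A(t)Q_1+B(t)Q_2+C(t)$ minus the base points common to $A,B,C$. I expect the bare degree in $t$ to exceed $6$ and the base points to come from the four isotropic tangency points of $\gamma$, where reflection degenerates. So we must cancel the common factors carefully and show that exactly $6$ generic solutions remain. Geometrically, a point $Q$ receives the ray from $O$ after reflection at $P$ exactly when $P$ is a critical point of $|OP|+|PQ|$ on $\gamma$. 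Counted algebraically this is a degree-$6$ problem: the intersection of $\gamma$ with a certain cubic, the "Apollonius-type" curve, minus the spurious points at the circular points $[1:\pm i:0]$. This gives a cleaner route to $d^*=6$.

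\textbf{Step 3: cusps and inflections, then Plücker.} Inflection points of $\Gamma_1$ correspond to cusps of the dual curve $\{\ell(t)\}$, i.e.\ to parameters where $\ell'(t)\wedge\ell(t)=0$ with $\ell'\neq 0$ up to scale. Cusps of $\Gamma_1$ correspond to inflections of $\{\ell(t)\}$, i.e.\ to zeros of $\det(\ell,\ell',\ell'')$. The four real cusps are supplied by Theorem 2 (the rays tangent to $E$ and $H$), and the claim $\kappa=4$ means there are no others over $\mathbb{C}$. For a rational curve parametrized by polynomials of degree $m$ (after removing base points), $\deg_t \det(\ell,\ell',\ell'')\le 3m-6$. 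With $m=d^*=6$ this gives $12$ zeros counted with multiplicity. We then need to show that these distribute as $4$ simple zeros (cusps of $\Gamma_1$) plus higher-order contributions. Alternatively, compute $\kappa^*$ (cusps of the dual) directly: I expect $\kappa^*=4$, coming from the lines $\ell(t)$ stationary at four points, plausibly the reflections at the vertices of the ellipse, where the pencil folds. Once $d^*=6$, $\kappa^*=4$ and $g=0$ are known, the Plücker formulas $d=d^*(d^*-1)-2\delta^*-3\kappa^*$ and $g=\tfrac{(d^*-1)(d^*-2)}{2}-\delta^*-\kappa^*$ give $\delta^*=10-4=6$ and $d=30-12-12=6$. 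Then $\kappa=3d^*(d^*-2)-\ldots$, or symmetrically $g=\tfrac{(d-1)(d-2)}{2}-\delta-\kappa$ together with $\kappa^*=3d(d-2)-6\delta-8\kappa$, yields $\delta=6$ and $\kappa=4$. The Plücker formulas only apply to curves whose singularities are ordinary nodes and cusps, and so do the dual ones. We must therefore verify genericity: no tacnodes, no higher cusps, no hyperflexes.

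\textbf{Main obstacle.} The hard step is this genericity verification together with the exact count $\kappa^*=4$ (equivalently, the exclusion of degenerate singularities, including at infinity and at the isotropic directions). My first attempt will be explicit computation of $\det(\ell,\ell',\ell'')$ and $\ell\wedge\ell'$ in the parameter $t$, factoring out the base-point contributions at the isotropic tangency parameters. This is where the hypothesis on $O$ enters: if $O$ lies on an isotropic tangent, extra cancellations occur and the singularities degenerate.
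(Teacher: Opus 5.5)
\textbf{The gap is in Step 3.} Steps 1 and 2 are essentially sound. Your reflection cubic meeting $\gamma$ in six points is a valid direct route to $d^*=6$, where the paper instead moves $O$ to the line at infinity via Lemma 1 and counts $4+2$ reflected lines. Note, though, that a non-circular ellipse does not pass through $[1:\pm i:0]$, so nothing needs to be subtracted there. The real problem is that $g$ and $d^*$ alone leave the Pl\"ucker system underdetermined, and you never supply a third datum. You say you \emph{expect} $\kappa^*=4$ and place the stationary reflected lines at the vertices, but that guess is wrong. At a finite point $P\in\gamma$ with non-isotropic tangent, $\ell'\wedge\ell=0$ forces the reflected line to be the tangent at $P$. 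Hence the incoming line $OP$ must also be that tangent, so $O$ lies on it, which fails at the vertices for a general (e.g.\ interior) source. You also give no upper bound for $\kappa^*$ and defer the ``genericity'' check. As written, the argument does not close.

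\textbf{How the paper closes it.} It uses two inequalities rather than an exact computation of $\kappa^*$. First, $\kappa\ge 4$ comes from Theorem 2, transported from real interior sources to all admissible complex $O$ by Lemma 1; you would need that lemma too. Second, $\delta^*\le 6$ comes from classifying the bitangents for $O$ at infinity:
\begin{itemize}
\item the line at infinity;
\item at most three lines through the centre, since only six reflected lines pass through it;
\item the two remaining sides of a parallelogram orbit, which exists only for $\lambda=-a^2b^2/(a^2+b^2)$.
\end{itemize}
Pl\"ucker then gives $\delta^*+\kappa^*=10$ and $\kappa+2\kappa^*=12$, forcing $\kappa=\kappa^*=4$.

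\textbf{How your Wronskian route could be repaired.} The $12$ zeros of $\det(\ell,\ell',\ell'')$ are at least $\kappa+2\kappa^*$, since flexes of the dual vanish to order $\ge1$ and its cusps to order $\ge 2$. The four real cusps give $\kappa\ge 4$. The four points of isotropic contact give $\kappa^*\ge 4$: there the reflected line is the isotropic tangent itself. In the coordinate of Proposition 1 its direction $\varepsilon(t)^2/\omega(t)$ has at least a double zero, since $\varepsilon$ vanishes and $\omega\neq 0$ because $O$ is off that tangent. Hence $\ell'(t_0)=0$. Then $4+2\cdot 4=12$ forces equality throughout. Locating the inflections of $\Gamma_1$ on the isotropic tangents is exactly the idea your proposal is missing.
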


\begin{corollary}
    For every ellipse $\gamma~\subset \mathbb{R}^2$ and a~light source inside $\gamma$ different from its focus, the~first caustic by reflection $\Gamma_1 \subset \mathbb{R}\mathrm{P}^2$ has exactly four ordinary cusps.
\end{corollary}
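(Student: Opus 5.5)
We will deduce the corollary from Theorem 3 together with Theorem 2. Let $\gamma\subset\mathbb{R}^2$ be an ellipse and $O$ a light source inside $\gamma$, not a focus.

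\textbf{Step 1: the circle case.} First we dispose of the case where $\gamma$ is a circle, which Theorem 3 excludes. For a circle the two foci coincide with the centre, so $O$ is not the centre. The first caustic by reflection from a non-central interior point of a circle is classical. It can be parametrized explicitly, and a direct computation shows that its curvature blows up at exactly the four points corresponding to the two reflections along the diameter through $O$ and the two reflections along the perpendicular directions. Each of these is an ordinary $(2,3)$ cusp. Alternatively, one can note that Theorem 2 still supplies four ordinary cusps, and that the explicit parametrization leaves no room for others.

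\textbf{Step 2: reduction to Theorem 3 when $\gamma$ is not a circle.} Regard $\gamma$ as a real conic in $\mathbb{C}\mathrm{P}^2$ and $O$ as a real point of $\mathbb{C}^2$. We check the hypotheses of Theorem 3.
\begin{itemize}
\item $O\notin\gamma$, since $O$ lies strictly inside $\gamma$.
\item The isotropic tangents to $\gamma$ are the lines through the circular points $(1:\pm i:0)$ tangent to $\gamma$. There are four of them. Their pairwise intersections consist of the two real foci, the two imaginary foci, and the circular points. The real points on an isotropic line $y-y_0=\pm i(x-x_0)$ form only the single point $(x_0,y_0)$. Hence the only real points on isotropic tangents are the real foci, which $O$ avoids by assumption.
\end{itemize}
So Theorem 3 applies: the complex caustic $\Gamma_1$ is an algebraic curve of degree $6$ with exactly $\kappa=4$ ordinary cusps and no other cusps. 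I expect this verification, that the excluded set meets $\mathbb{R}^2$ only in the foci, to be the only delicate point.

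\textbf{Step 3: the real caustic.} The real caustic in $\mathbb{R}\mathrm{P}^2$ is the real part of the complexified caustic. This holds because the reflection law and the envelope construction are algebraic with real coefficients. In particular, the real one-parameter family of rays complexifies to the family defining the complex $\Gamma_1$. Consequently every cusp of the real caustic is a cusp of the complex curve, so the real caustic has at most four cusps.

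\textbf{Step 4: the lower bound.} Theorem 2, with $n=1$, exhibits four distinct real cusps, namely the tangency points with the confocal ellipse $E$ and hyperbola $H$ through $O$. Combining this with Step 3, the real caustic has exactly these four cusps, and all of them are real.

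\textbf{Step 5: ordinarity.} An ordinary complex cusp at a real point of a curve defined over $\mathbb{R}$ is a semicubical cusp analytically. Its real trace is therefore an ordinary real cusp.

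This establishes that $\Gamma_1\subset\mathbb{R}\mathrm{P}^2$ has exactly four ordinary cusps.
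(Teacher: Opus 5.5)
Your proposal is correct and is essentially the paper's proof. The circle case is settled by the classical result (the paper cites Cayley). Otherwise you check that the only real points on the isotropic tangents are the real foci, so Theorem 3 applies, and then squeeze the real cusps between the complex count $\kappa=4$ and the four real cusps supplied by the known lower bound.

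Two small remarks. Citing Theorem 2 for the lower bound, as you do, matches the non-focal hypothesis slightly better than the paper's appeal to Theorem 1, which assumes a general light source. You should drop the ``alternative'' in Step 1, however, because Theorem 2 presupposes a confocal hyperbola through $O$, and a circle has none.
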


The~proofs, key definitions, and technical details are provided in the~following sections. In~particular, Section~2 introduces the~tools that were used in the~main proofs, while Section~3 presents the~proofs of Theorem~3 and Corollary~1. Section~4 is dedicated to a~small theorem about parallelogram billiard orbits in ellipses which was obtained during the~proof of Theorem~3:

\begin{theo}
    Consider a~complex 4-periodic orbit $(A,B,C,D)$ in the~non-circular ellipse \\ $\gamma: \cfrac{x^2}{a^2} + \cfrac{y^2}{b^2} = z^2$ in $\mathbb{C}\mathrm{P}^2$. $ABCD$ is a~parallelogram iff $ABCD$ is circumscribed \\ about $\gamma_\lambda: \cfrac{x^2}{a^2 + \lambda} + \cfrac{y^2}{b^2 + \lambda} = z^2$ with $\lambda~= -\cfrac{a^2b^2}{a^2 + b^2}$.
\end{theo}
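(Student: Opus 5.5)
The plan is to use the classical fact that in a billiard inside a conic, the segments of an orbit stay tangent to a fixed confocal conic $\gamma_\lambda$, and to treat the parallelogram condition as a symmetry condition. First, $ABCD$ is a parallelogram iff its diagonals bisect each other. I will show this is equivalent to $C=-A$ and $D=-B$, i.e. the orbit is centrally symmetric about the origin. One direction is immediate. Conversely, if $ABCD$ is a parallelogram inscribed in the central conic $\gamma$, the common midpoint $M$ of the diagonals is the midpoint of two chords $AC$ and $BD$. For a nondegenerate central conic, a point that bisects two distinct chords must be the centre, since each such chord is conjugate to the diameter through $M$. Hence $M=0$. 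In the complex setting one has to make sure the two chords are distinct and not isotropic; I expect a small genericity discussion there.

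Next I use the reflection law together with central symmetry. Suppose $C=-A$ and $D=-B$. The reflection at $B$ sends line $AB$ to line $BC = B(-A)$. By the central symmetry of $\gamma$, the reflection at $-B$ sends line $(-A)(-B)$ to line $(-B)A$. So a 4-periodic orbit is a parallelogram iff the single reflection at $B$ maps the direction $B-A$ to the direction $B+A$, with $A,B\in\gamma$. Writing the reflection with respect to the normal $n_B=(x_B/a^2,\,y_B/b^2)$ and the complexified Euclidean form, this condition says that the tangent line at $B$ bisects $B-A$ and $B+A$. Equivalently, $B-A$ and $B+A$ have the same $(\cdot,n_B)$-ratio up to sign, i.e.
\[
\frac{(B-A,n_B)^2}{(B-A,B-A)}=\frac{(B+A,n_B)^2}{(B+A,B+A)}.
\]

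Third, I express tangency to a confocal conic. The line through $A$ and $B$ on $\gamma$ is tangent to $\gamma_\lambda$ iff $(a^2+\lambda)u^2+(b^2+\lambda)v^2=w^2$, where $(u:v:w)$ are the line's coordinates. Parametrize $A=(a\cos s,b\sin s)$ and $B=(a\cos t,b\sin t)$. The chord $AB$ has coordinates proportional to
\[
\Bigl(\tfrac{\cos\frac{s+t}2}{a},\ \tfrac{\sin\frac{s+t}2}{b},\ \cos\tfrac{s-t}2\Bigr),
\]
and the chord $B(-A)$ is obtained by $s\mapsto s+\pi$. Tangency of both chords to the same $\gamma_\lambda$ gives two equations. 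Subtracting them eliminates $\lambda$ and reproduces the reflection condition at $B$. Adding or solving them then yields $\lambda$ in terms of $s,t$. The goal is to show that, on the locus where the reflection condition holds, $\lambda$ is forced to equal $-a^2b^2/(a^2+b^2)$. Conversely, starting from a tangent to $\gamma_\lambda$ with this $\lambda$ and applying the symmetry shows the orbit closes after 4 steps as the parallelogram $A,B,-A,-B$. By Poncelet's porism, periodicity is a property of the caustic alone, so this settles the converse direction.

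The main obstacle is this elimination. One has to reduce the reflection condition to a clean relation in $s$ and $t$. I expect something like $\tan s\tan t=-b^2/a^2$, i.e. $OA$ and $OB$ are conjugate diameters; for the $a=b$ circle this becomes the square case, which the statement excludes. Substituting this relation should give the stated $\lambda$; in the real case, conjugate diameters make $A,B,-A,-B$ a parallelogram with vertices on $\gamma$. Along the way I must discard degenerate solutions such as $A=\pm B$ and isotropic chords, which is where non-circularity of $\gamma$ enters.
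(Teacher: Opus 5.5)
Your proof is correct, but it reaches the value of $\lambda$ by a different computation than the paper.

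The paper's route is as follows. It fixes the common direction $O=[v_x:v_y:0]$ of one pair of opposite sides and writes these sides as the two tangents $v_yx-v_xy\pm mz=0$ from $O$ to $\gamma_\lambda$. It obtains central symmetry from the fact that parallel tangents to a central conic are symmetric about the centre. It then extracts the other pair of sides as the degenerate member $\bigl(\frac{v_xb}{a}x+\frac{v_ya}{b}y\bigr)^2+\lambda z^2=0$ of the pencil $Q_\gamma+k\,l_{AB}l_{CD}$, and imposes tangency to $\gamma_\lambda$.

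You instead get central symmetry from the midpoint-of-chords argument, which is purely affine, so no isotropy caveat is needed there. You also parametrize vertices rather than directions.

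The elimination you call the main obstacle is in fact immediate. With $\sigma=\frac{s+t}{2}$ and $\delta=\frac{s-t}{2}$, tangency of $AB$ and of $B(-A)$ reads $(a^2+\lambda)\frac{\cos^2\sigma}{a^2}+(b^2+\lambda)\frac{\sin^2\sigma}{b^2}=\cos^2\delta$ and $(a^2+\lambda)\frac{\sin^2\sigma}{a^2}+(b^2+\lambda)\frac{\cos^2\sigma}{b^2}=\sin^2\delta$. Their sum is $\lambda(a^{-2}+b^{-2})=-1$, independent of $s,t$, which is the forward direction.

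Conversely, for $\lambda=-\frac{a^2b^2}{a^2+b^2}$ the first equation implies the second. So for every side $AB$ tangent to $\gamma_\lambda$, the reflected side at $B$ is $B(-A)$, and the orbit is $A,B,-A,-B$. No appeal to Poncelet's porism is needed.

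The paper's formulation is the one Proposition 5 requires, since there the source direction $O$ is fixed. Yours is shorter and treats both implications symmetrically.

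Two corrections to your heuristics. First, subtracting the two equations does not eliminate $\lambda$; dividing them does. Second, the relation $\tan s\tan t=-b^2/a^2$ you anticipate is right in eccentric angles, but it does not say that $OA,OB$ are conjugate diameters. It says $b^4x_Ax_B+a^4y_Ay_B=0$, i.e.\ the tangents to $\gamma$ at $A$ and $B$ are perpendicular. Conjugate diameters would mean $\cos(s-t)=0$, which does not give billiard orbits in general. The degenerate solutions $B=\pm A$ of the correct relation are exactly the points of isotropic contact, and these are the cases your genericity discussion must exclude.
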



\section{Preliminaries}

\subsection{Plücker formulas}

Consider a~curve $\mathcal{C}$ in the~complex projective plane defined by a~non-degenerate algebraic equation. Lines in the~plane can be identified with points of the~dual projective plane. The~tangents to $\mathcal{C}$ form the~dual curve on the~dual plane, which is traditionally denoted by $\mathcal{C}^*$.

Each such curve is characterised by four numerical invariants. 
\begin{itemize}
    \item $g$ is the~genus of both $\mathcal{C}$ and $\mathcal{C}^*$.
    \item $d$ and $d^*$ are the~degrees of $\mathcal{C}$ and $\mathcal{C}^*$, respectively. Geometrically, these invariants measure the~number of points where a~generic complex projective line intersects the~curve.
    \item $\delta$ and $\delta^*$ are the~numbers of ordinary double points of $\mathcal{C}$ and $\mathcal{C}^*$, which are nodes (points where the~curve has two distinct tangents) and isolated points. In the~original plane, the~ordinary double points of the~dual curve correspond to bitangents to the~curve.
    \item $\kappa$ and $\kappa^*$ are the~numbers of ordinary cusps (also called 2-cusps) of $\mathcal{C}$ and $\mathcal{C}^*$, that is, points around which there is a~biholomorphic diffeomorphism mapping a~neighbourhood of the~point to a~neighbourhood of the~origin on the~(x, y)-plane, such that the~cusp is~sent to the~origin and the~curve is transformed into $y^2 = x^3$.
\end{itemize}

If $\mathcal{C}$ or $\mathcal{C}^*$ has higher order singularities, they are counted as multiple double points or cusps according to the~nature of the~singularity. For example, an~ordinary triple point is equivalent to three ordinary double points.

Plücker's classical result states that the~above-mentioned seven characteristics of the~curve and its dual are linked by simple algebraic relations.

\begin{theo}[Plücker formulas]
    For numerical invariants of a~curve $\mathcal{C}$ on the~complex projective plane and its dual curve $\mathcal{C}^*$, the~following equations are true:
    \begin{itemize}
        \item $d^*=d(d-1)-2\delta~-3\kappa$ and its dual version $d=d^*(d^*-1)-2\delta~^*-3\kappa^*$
        \item $\kappa^*=3d(d-2)-6\delta~-8\kappa$ and its dual version $\kappa~=3d^*(d^*-2)-6\delta^*-8\kappa^*$
        \item $g=\cfrac{1}{2}(d-1)(d-2)-\delta~-\kappa$ and its dual version $g=\cfrac{1}{2}(d^*-1)(d^*-2)-\delta^*-\kappa^*$
    \end{itemize}
\end{theo}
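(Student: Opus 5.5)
Since the Plücker formulas are classical, I plan to reprove them by the standard intersection-theoretic argument. The dual versions will follow from the primal ones by biduality. In characteristic zero $(\mathcal{C}^*)^* = \mathcal{C}$, and the Gauss map $\mathcal{C}\dashrightarrow\mathcal{C}^*$ is birational. Hence $\mathcal{C}$ and $\mathcal{C}^*$ have the same genus, and the roles of $(d,\delta,\kappa)$ and $(d^*,\delta^*,\kappa^*)$ are exchanged. In particular, an ordinary cusp of $\mathcal{C}^*$ corresponds to an ordinary flex of $\mathcal{C}$, and a node of $\mathcal{C}^*$ to a bitangent of $\mathcal{C}$. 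Throughout I assume, as the statement implicitly does, that the only singularities of $\mathcal{C}$ and $\mathcal{C}^*$ are nodes and ordinary cusps, with higher singularities counted as stated. So it suffices to prove the three primal identities.

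For the class formula I will fix a generic point $P\in\mathbb{C}\mathrm{P}^2$. Then $d^*$ is the number of tangent lines to $\mathcal{C}$ through $P$, that is, the number of smooth points of $\mathcal{C}$ lying on the polar curve $\sum_i P_i\,\partial F/\partial x_i = 0$ of degree $d-1$. By Bézout the total intersection is $d(d-1)$. A local computation at the singular points then gives the corrections.
\begin{itemize}
\item At a node, written locally as $xy=0$, the polar of a generic point is a smooth curve through the node transverse to both branches. It therefore meets each branch once, giving intersection multiplicity $2$.
\item At a cusp $y^2=x^3$, the polar is $aY+bX^2=0$, a generic smooth curve through the cusp. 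Pulling back along the parametrization $(t^2,t^3)$ gives multiplicity $3$.
\end{itemize}
Genericity of $P$ ensures that the smooth intersections are transverse. Hence $d^* = d(d-1)-2\delta-3\kappa$.

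For the genus formula I will use Riemann–Hurwitz for the projection from $P$ of the normalization $\tilde{\mathcal{C}}\to\mathbb{P}^1$, a map of degree $d$. Its ramification consists of the $d^*$ simple tangencies, each contributing $1$, together with one point for each cusp branch, since the parametrization $(t^2,t^3)$ has ramification index $2$. Nodes contribute nothing. This gives $2g-2 = -2d + d^* + \kappa$. Substituting the class formula yields $g=\tfrac12(d-1)(d-2)-\delta-\kappa$.

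For the flex formula I will intersect $\mathcal{C}$ with its Hessian curve, which has degree $3(d-2)$, so Bézout gives $3d(d-2)$. At smooth points, the Hessian meets $\mathcal{C}$ exactly at flexes, with multiplicity $1$ at ordinary flexes. I expect the main obstacle to be the local intersection multiplicities of the Hessian at the singular points, namely $6$ at a node and $8$ at a cusp. I would compute these directly from local normal forms $xy+\dots$ and $y^2-x^3+\dots$, keeping track of how the higher-order terms of $F$ enter the Hessian determinant.

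As a shortcut to avoid that computation, I could instead apply the genus formula to $\mathcal{C}^*$ and solve: $g$ is shared, $d^*$ is known, and $\kappa^*$ enters linearly. I would also derive the Riemann–Hurwitz formula for the dual projection, $2g-2=-2d^*+d+\kappa^*$. Together with the class formula, this gives $\kappa^* = 3d(d-2)-6\delta-8\kappa$ after elimination, provided one checks that in $d+\kappa^*$ each ordinary flex contributes ramification $1$. That check follows from its local dual parametrization being cuspidal.
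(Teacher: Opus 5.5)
The paper does not prove this theorem. It quotes the Plücker formulas as a classical result and uses them as a black box, so there is no argument in the paper to compare yours with. Your derivation is the standard one, and it is correct under your standing assumption that $\mathcal{C}$ and $\mathcal{C}^*$ have only nodes and ordinary cusps.

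\emph{The cusp step.} The polar is \emph{not} a generic smooth curve through the cusp; a generic one would meet the branch $(t^2,t^3)$ with multiplicity $2$. Its linear part is the polar of $P$ with respect to the tangent cone $y^2$, i.e.\ a multiple of $y$. So it is forced to be tangent to the cuspidal tangent, and that is why the contribution is $3$. Your formula $aY+bX^2=0$ encodes this correctly; only the wording is off.

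\emph{The flex formula.} Your ``shortcut'' is already a complete proof, and the Hessian multiplicities $6$ and $8$ are never needed. The identity $2g-2=-2d^*+d+\kappa^*$ is just your Riemann--Hurwitz identity applied to $\mathcal{C}^*$, which is legitimate by biduality and birationality of the Gauss map. Subtracting it from $2g-2=-2d+d^*+\kappa$ gives $\kappa^*-\kappa=3(d^*-d)$. Inserting the class formula then yields $\kappa^*=3d(d-2)-6\delta-8\kappa$. The genus formula for $\mathcal{C}^*$ by itself would not do this, since it contains the unknown $\delta^*$.

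\emph{Where you prove less than the paper asserts.} Just before the theorem, the paper adopts the convention that higher singularities are counted as several nodes and cusps. Your phrase ``with higher singularities counted as stated'' is not backed by your local analysis, which only treats $xy=0$ and $y^2=x^3$. One way to fill this gap within your framework:
\begin{itemize}
\item Replace the polar contribution at $p$ by Teissier's $\mu_p+m_p-1$.
\item Replace the Riemann--Hurwitz contribution by $m_p-r_p$, one term $m_i-1$ per branch.
\item Use Milnor's formula $\mu_p=2\delta_p-r_p+1$.
\end{itemize}
Your elimination then yields all three formulas with $p$ counted as $m_p-r_p$ cusps and $\delta_p-(m_p-r_p)$ nodes, and likewise on the dual side. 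For example, an ordinary triple point counts as three nodes, matching the paper's example. This matters here, because the paper applies the formulas to $\Gamma_1$ without knowing in advance that the singularities of $\Gamma_1$ and $\Gamma_1^*$ are ordinary.
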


Knowing the~values of any three invariants allows one to uniquely determine the~remaining four via~the~Plücker formulas. Therefore, the~strategy of determining the~number of cusps of~caustics by reflection through finding the~values of their other characteristics is a~natural and promising path. However, to use this powerful tool, the~complexification of the~entire problem is required.

\subsection{Complex billiards: the~new law of reflection and its properties}

We begin the~complexification of the~problem by expanding the~billiard system itself to the~complex domain. This subsection follows the~approach developed by Alexey Glutsyuk, see~\cite{Gl1}, \cite{Gl2}.

Consider the~complexification of $\mathbb{R}^2$ with the~Euclidean metric, namely, the~complex affine plane $\mathbb{C}^2\subset\mathbb{C}\mathrm{P}^2$ with the~complex-bilinear quadratic form $dz_1^2+dz_2^2$. We identify it with the~affine chart in $\mathbb{C}\mathrm{P}^2$ consisting of points $[z_1:z_2:1]$. The~line at infinity $\hat{\mathbb{C}}_\infty$ consists of points $[z_1:z_2:0] \in \mathbb{C}\mathrm{P}^2$. Further in this text we call a~line \textbf{finite} if it differs from the~line at~infinity.

The~points $I_1 = [1:-i:0]$ and $I_2 = [1:i:0]$ are called \textbf{isotropic points at infinity}. These points are distinguished by the~fact that they correspond to the~directions of straight lines on which the~form $dz_1^2 + dz_2^2$ vanishes. \textbf{Isotropic lines} are lines passing through at least one of the~isotropic points. In particular, the~line at infinity is isotropic. 

The~complexification of a~real ellipse $\gamma$ has four \textbf{isotropic tangents}, i.e, tangents that are isotropic lines. Note that the~line at infinity is never tangent to the~ellipse, since isotropic points do not belong to the~complexification of an~ellipse unless it is a~circle. In the~latter case, the~line at infinity intersects the~circle at both isotropic points and is, subsequently, not a~tangent. The~points of contact of the~isotropic tangents are called \textbf{points of isotropic contact}.

The~foci of a~complexified ellipse are defined as the~intersection points of non-parallel isotropic tangents (in other words, intersections of isotropic tangents distinct from the~isotropic points themselves). The~foci form two pairs such that any two foci lying on the~same isotropic tangent belong to different pairs. One of these pairs corresponds to the~standard foci of the~real ellipse.

The~most intriguing part is the~complexification of the~reflection law.

\begin{defin}
    the~reflection (symmetry) $\mathbb{C}^2\rightarrow\mathbb{C}^2$ with respect to a~non-isotropic complex line $L\subset\mathbb{C}\mathrm{P}^2$ is the~unique non-trivial involution such that it is a~complex isometry of the~form $dz_1^2+dz_2^2$ and fixes all points of~$L$. Accordingly, the~reflection (symmetry) at the~point $x\in L$ will be the~restriction of symmetry with respect to $L$ to the~space of lines passing through $x$.
\end{defin}

Reflection in an~isotropic line at a~given point is defined as the~limit of symmetries with respect to non-isotropic lines.

\begin{defin}
    the~lines $l_1$ and $l_2$ passing through the~point $x$ are symmetric with respect to the~isotropic line $L$ passing through a~finite point $x$ if there are lines $l_1^n\rightarrow~l_1$, $l_2^n\rightarrow l_2$, non-isotropic lines $L_n\rightarrow L$, and points $x_n\rightarrow x$, $x_n \in l_1^n, x_n \in l_2^n, x_n \in L_n$, such that for each $n\in\mathbb{N} $ $l_1^n$ and $l_2^n$ are symmetric with respect to $L_n$ at the~point $x_n$. 
\end{defin}

The~following propositions from \cite{Gl1}, \cite{Gl2} are very useful for understanding the~complex reflection law at the~points of an~ellipse. We present them here without proofs.

For convenience, let us introduce the~affine coordinate $z$ on the~line at infinity $\hat{\mathbb{C}}_\infty$ such that the~isotropic points at~infinity correspond to 0 and $\infty$.

\begin{prop}[Glutsyuk]
    Symmetry with respect to a~finite non-isotropic line $L$ intersecting $\hat{\mathbb{C}}_\infty$ at the~point $\varepsilon\in \hat{\mathbb{C}}_\infty\backslash\{0,\infty\}$ acts on $\hat{\mathbb{C}}_\infty$ by mapping $z\mapsto\cfrac{\varepsilon^2}{z}$.
\end{prop}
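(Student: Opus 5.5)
The plan is a direct computation: write the symmetry in coordinates adapted to the isotropic directions. First I will pass to the complex coordinates
\[
u = z_1 + i z_2, \qquad v = z_1 - i z_2 ,
\]
in which the form becomes $dz_1^2+dz_2^2 = du\,dv$. The isotropic directions are then exactly the $u$-axis and the $v$-axis.

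Next I fix the coordinate on $\hat{\mathbb{C}}_\infty$. A direction vector $(w_1,w_2)$ has $u$- and $v$-components $(w_1+iw_2,\; w_1-iw_2)$. I set $z = (w_1+iw_2)/(w_1-iw_2)$, i.e. the ratio of these components. Then $I_1=[1:-i:0]$ gives $z=0$ and $I_2=[1:i:0]$ gives $z=\infty$, as required. This is just the convention the statement presupposes; any other admissible coordinate differs by $z\mapsto cz$, and the formula transforms compatibly.

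Then I describe the reflection itself. A complex isometry of $du\,dv$ fixing the origin is linear and either preserves or swaps the two isotropic directions. The orientation-preserving ones are $(u,v)\mapsto(\lambda u,\lambda^{-1}v)$, and the others are $(u,v)\mapsto(\mu v,\mu^{-1}u)$.

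I translate so that $L$ passes through the origin; translations act trivially at infinity, so this changes nothing on $\hat{\mathbb{C}}_\infty$. Let the direction of $L$ have components $(u_0,v_0)$ with $u_0/v_0=\varepsilon$. Since $L$ is non-isotropic, $u_0v_0\neq 0$, so $\varepsilon\neq 0,\infty$.

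The symmetry fixes $L$ pointwise and is a nontrivial involution. A map of the form $(\lambda u,\lambda^{-1}v)$ fixing $(u_0,v_0)$ must have $\lambda=1$, so it is the identity. Hence the symmetry is of the swapping type. The condition $\mu v_0=u_0$ gives $\mu=\varepsilon$, so the symmetry is $(u,v)\mapsto(\varepsilon v,\varepsilon^{-1}u)$. This map is indeed an involution, which confirms the uniqueness claimed in the definition.

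Finally I read off the action at infinity. A direction with ratio $z=u/v$ is sent to $(\varepsilon v)/(\varepsilon^{-1}u)=\varepsilon^2 v/u=\varepsilon^2/z$.

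The only step needing care is the bookkeeping of which isotropic point is $0$ and which is $\infty$, and checking that the coordinate is consistent with the point $\varepsilon$ where $L$ meets $\hat{\mathbb{C}}_\infty$. Everything else is routine linear algebra.
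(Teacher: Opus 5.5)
The paper does not prove this proposition; it quotes it from Glutsyuk without proof, so there is no argument in the paper to compare with. Your computation is correct and self-contained, and it is a perfectly good way to establish the statement.

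In the isotropic coordinates $u=z_1+iz_2$, $v=z_1-iz_2$ you do three things. You sort the linear isometries of $du\,dv$ into diagonal maps $(\lambda u,\lambda^{-1}v)$ and anti-diagonal maps $(\mu v,\mu^{-1}u)$. You observe that a diagonal one fixing a non-isotropic vector is the identity. You solve $\mu v_0=u_0$ to get $\mu=\varepsilon$. This pins down the symmetry as $(u,v)\mapsto(\varepsilon v,\varepsilon^{-1}u)$ and gives existence and uniqueness at once. Reading off the ratio $u/v$ then gives $z\mapsto\varepsilon^2/z$. Equivalently, your argument shows that the symmetry acts on $\hat{\mathbb{C}}_\infty$ as a M\"obius map swapping the isotropic points and fixing $\varepsilon$, and the only such map is $z\mapsto\varepsilon^2/z$.

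There is one bookkeeping slip, in exactly the step you flagged. With $z=(w_1+iw_2)/(w_1-iw_2)$, the point $I_1=[1:-i:0]$ has $w_1-iw_2=1-1=0$, so $z=\infty$. The point $I_2=[1:i:0]$ has $w_1+iw_2=0$, so $z=0$. This is the reverse of what you wrote.

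The slip is harmless. The paper never specifies which isotropic point sits at $0$. Moreover, the formula is invariant under $z\mapsto 1/z$, $\varepsilon\mapsto 1/\varepsilon$, since $1/(\varepsilon^2/z)=(1/\varepsilon)^2/(1/z)$. However, your remark that admissible coordinates differ only by $z\mapsto cz$ is incomplete. They differ by $z\mapsto cz$ or $z\mapsto c/z$, and it is the second family that absorbs your mislabelling. The formula is compatible with both families.
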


\begin{prop}[Glutsyuk]
    Let $L$ be an~isotropic line through a~finite point $x$. Two lines are symmetric with respect to $L$ iff some of them coincides with $L$.
\end{prop}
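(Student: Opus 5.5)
We prove Proposition 2 by working in the coordinate $z$ on $\hat{\mathbb{C}}_\infty$, where the isotropic points are $0$ and $\infty$, and applying Proposition 1. A line through a finite point is determined by its point at infinity. Moreover, lines $l^n\ni x_n$ converge to a line $l\ni x$, with $x_n\to x$ finite, iff their points at infinity converge in $\hat{\mathbb{C}}_\infty\cong\mathbb{CP}^1$.

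Since $L$ passes through the finite point $x$, it is not the line at infinity. It is isotropic, so its point at infinity is $0$ or $\infty$; by the symmetry $z\mapsto 1/z$ (swapping $I_1,I_2$) we may assume it is $0$. Take approximating non-isotropic lines $L_n\to L$ and denote by $\varepsilon_n\in\hat{\mathbb{C}}_\infty\setminus\{0,\infty\}$ the point at infinity of $L_n$. Then $\varepsilon_n\to 0$.

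The reflection in $L_n$ fixes $x_n\in L_n$ and maps $l_1^n$ to $l_2^n$. Hence, by Proposition 1, the points at infinity $z_n$ of $l_1^n$ and $w_n$ of $l_2^n$ satisfy $w_n=\varepsilon_n^2/z_n$, which we read as $z_nw_n=\varepsilon_n^2$, with the conventions $\infty\mapsto 0$ and $0\mapsto\infty$.

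\emph{Necessity.} Let $z=\lim z_n$ and $w=\lim w_n$. Suppose $z\neq 0$.
\begin{itemize}
\item If $z$ is finite, then $w_n=\varepsilon_n^2/z_n\to 0$.
\item If $z=\infty$, then $w_n=\varepsilon_n^2/z_n\to 0$ as well.
\end{itemize}
So at least one of $z,w$ equals $0$. The corresponding limit line passes through $x$ with point at infinity $0$, and therefore coincides with $L$.

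\emph{Sufficiency.} Suppose $l_1=L$ and $l_2\ni x$ is arbitrary, with point at infinity $w$. Put $x_n=x$, take any $\varepsilon_n\to 0$ with $\varepsilon_n\neq 0$, and let $L_n$ be the line through $x$ with point at infinity $\varepsilon_n$. These lines are non-isotropic and $L_n\to L$.
\begin{itemize}
\item If $w\neq 0$, set $l_2^n=l_2$, and let $l_1^n$ be the line through $x$ with point at infinity $\varepsilon_n^2/w$. This point tends to $0$ (it equals $0$ if $w=\infty$), so $l_1^n\to L$. By Proposition 1, $l_1^n$ and $l_2^n$ are symmetric with respect to $L_n$ at $x$.
\item If $w=0$, i.e.\ $l_2=L$, take $l_1^n=l_2^n=L_n$. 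Each $L_n$ is fixed by its own reflection, and $L_n\to L$.
\end{itemize}
Either way, $l_1$ and $l_2$ are symmetric with respect to $L$ by the limit definition.

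The only delicate step is the bookkeeping at $0$ and $\infty$ in the limits. It is handled by treating the relation $w_n=\varepsilon_n^2/z_n$ as the Möbius map on $\mathbb{CP}^1$, and by noting that $\varepsilon_n^2/z_n\to 0$ whenever $z_n$ stays away from $0$.
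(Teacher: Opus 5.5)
The paper quotes this proposition from Glutsyuk without proof, so there is no in-paper argument to compare against. Your proof is correct and self-contained, given Proposition~1 and the limit definition (Definition~3).

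In effect, you show that the graphs $\{zw=\varepsilon_n^2\}\subset\hat{\mathbb{C}}_\infty\times\hat{\mathbb{C}}_\infty$ of the reflections in $L_n$ degenerate, as $\varepsilon_n\to 0$, to the reducible curve $\{z=0\}\cup\{w=0\}$. Your necessity part shows that every limit of points on these graphs lies on this union. Your sufficiency part realises every point of the union as such a limit; in the case $l_1=l_2=L$ you use the constant choice $l_1^n=l_2^n=L_n$, which works.

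Two routine points are used implicitly, and both hold:
\begin{itemize}
\item Taking the point at infinity is continuous along your sequences because $x$ is finite. Hence $x_n$ eventually stays away from $\hat{\mathbb{C}}_\infty$, no limit line is $\hat{\mathbb{C}}_\infty$, and $\varepsilon_n$, $z_n$, $w_n$ are well defined for large $n$.
\item The reduction to $L\cap\hat{\mathbb{C}}_\infty=\{0\}$ is legitimate. Under the coordinate change $z'=1/z$, the formula of Proposition~1 keeps its form with $\varepsilon'=1/\varepsilon$. Alternatively, the case where $L$ passes through $\infty$ is identical, using $z_nw_n=\varepsilon_n^2\to\infty$.
\end{itemize}
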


Propositions 1 and 2 imply that the~reflection map for elliptical non-circular billiards is uniquely defined at all points except for the~points of isotropic contact, where a~singularity arises when one attempts to reflect an~isotropic tangent in itself. Indeed, in the~case of reflection at a~finite point other than a~point of isotropic contact, one can uniquely determine the~image of any given line: it passes through the~point of reflection and the~point at $\hat{\mathbb{C}}_\infty$ given by Proposition 1. Tangents in infinite points of the~non-circular ellipse are non-isotropic, and we get the~image as a~limit of symmetries in finite points. But in points of isotropic contact, by Proposition 2, the~isotropic tangent is symmetric to any line passing through the~point of isotropic contact, with respect to itself; therefore, the~map is not well-defined.

For our purposes, it is important to understand how the~complex reflection law is connected with the~real one. Suppose that the~light source lies on the~real plane. Since isotropic points at infinity are not real, any restriction of a~complex billiard to the~real plane does not have singularities. Moreover, it coincides with the~usual real reflection if viewed as a~map between lines on the~plane rather than a~map between rays. It happens due to the~absence of the~concepts of 'inside' and 'outside' in the~complex domain. Consequently, if one tries to follow the~trajectory of a~point in the~restriction of a~complex billiard within a~closed curve of degree~$n$, after any reflection the~choice of the~next reflection point is non-unique as it can happen in~any other $(n-1)$ points where the~reflected line intersects the~reflecting curve. As we consider billiards in ellipses which are curves of degree 2, such choice doesn't arise as after any reflection there is at most one point, other than the~point of reflection, where the~reflected line intersects the~ellipse. Thus, we can conclude that for a~real light source a~complex billiard in an~ellipse reduces to a~real billiard within the~restriction of this ellipse to the~real domain with a~natural choice of the~direction of the~rays.

We refer to \cite{Rom}, \cite{Fier}, \cite{Fier-caustics} \cite{Wein1}, \cite{Wein2} as examples of obtaining excellent results using the~technique of complex billiards.

\subsection{Remarks on the~definition of the~n-th caustic by reflection}

Let us recall the~definition of the~$n$-th caustic by reflection given in \cite{Bor-Tab}.

\noindent\textbf{Definition 1.}
    \textit{Let $\gamma$ be an~oval (a~smooth convex closed curve in $\mathbb{R}^2$) and an~ideal mirror; let~$O$ be a~point inside $\gamma$ and a~light source. For any $n\in\mathbb{N}$ \textbf{the~$n$-th caustic by reflection}~$\Gamma_n$ is the~envelope of the~family of rays emanating from $O$ that have undergone $n$ reflections in~$\gamma$.}

To discuss the~complexification of the~problem at hand it is necessary to generalise the~definition of the~main object. Let us present the~modified definition.

\noindent\textbf{Definition 1'.}
    \textit{Let $\gamma$ be an~oval (a~smooth convex closed curve) in $\mathbb{C}^2$ and an~ideal mirror; let~$O$ be a~point on the~plane, such that $O \notin \gamma$, and a~light source. For any $n\in\mathbb{N}$ \textbf{the~(complex) $n$-th caustic by reflection}~$\Gamma_n$ is the~envelope of the~family of lines emanating from $O$ to all points of $\gamma$ that have undergone $n$ reflections in~$\gamma$ in the~sense of the~complex law of refection.}

Specifically, we extend the~setting to the~complex plane and consider complex curves as possible mirrors. Also, as the~complex domain lacks the~distinction between the~interior and the~exterior of the~mirror, we remove this condition.

If the~light source $O$ is real and lies inside the~restriction of $\gamma$ to the~real plane, Definition 1' reduces to Definition 1. It can also occur that $\gamma$ and $O$ are real, but $O$ lies outside $\gamma$. To obtain the~real first caustic in this case, one can imagine that initial rays reflect at both points of intersection with $\gamma$, and the~reflected rays propagate to its interior. Though we do not focus on the~setting with the~external light source in this article, the~idea~is not new. E.g. Cayley studied the~first caustic by reflection in a~circle $\gamma$ for the~light source outside it, see $\cite{Cay}$.


\section{Proofs of Theorem 3 and Corollary 1}

The~idea~of the~proof of Theorem 3 is to find a~few values of the~numerical invariants of the~first caustic by reflection (in the~sense of Definition 1') and apply the~Plücker formulas afterwards. Thus, we present a~chain of small propositions before the~final proof itself.

First of all, it is quite easy to fix the~genus.

\subsection{Genus of the~n-th caustics by reflection}

\begin{prop}
    For each ellipse $\gamma~\subset \mathbb{C}\mathrm{P}^2$ and a~light source $O\in \mathbb{C}\mathrm{P}^2$ not located on the~isotropic tangents to the~ellipse or the~ellipse itself $\Gamma_n$ has genus zero for each $n \in \mathbb{N}, n > 0$.
\end{prop}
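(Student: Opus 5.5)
Since $\gamma$ is a smooth conic, it is rational, and I will exhibit $\Gamma_n$ as the image of a rational map from $\gamma\cong\mathbb{C}\mathrm{P}^1$ (or from a curve dominated by it), so that $\Gamma_n$ has genus zero as the image of $\mathbb{P}^1$ (Lüroth). Concretely, fix a rational parametrisation $t\mapsto P_1(t)$ of $\gamma$. The initial line is $\ell_0(t)=OP_1(t)$; this is a rational family of lines, i.e.\ a rational map $\mathbb{P}^1\to(\mathbb{C}\mathrm{P}^2)^*$, well defined since $O\notin\gamma$.

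Next I show inductively that the reflected line $\ell_k(t)$ and the next reflection point $P_{k+1}(t)$ depend rationally on $t$. Given $P_k(t)\in\gamma$ and $\ell_{k-1}(t)\ni P_k(t)$, the tangent to $\gamma$ at $P_k$ meets $\hat{\mathbb{C}}_\infty$ at a point $\varepsilon(t)$ rational in $t$. By Proposition 1 the direction $z$ of $\ell_{k-1}$ is sent to $\varepsilon^2/z$, which is rational. Hence $\ell_k(t)$, the line through $P_k(t)$ with that direction, is a rational family. The second intersection $P_{k+1}(t)$ of $\ell_k(t)$ with the conic $\gamma$ is also rational in $t$: by Vieta, the residual root of a quadratic with one known root is a rational function of the coefficients. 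The reflection law is undefined only at the finitely many points of isotropic contact. Away from these values of $t$ the maps are regular, and since $\mathbb{P}^1$ is a smooth curve, a rational map from it extends regularly across them. So $t\mapsto \ell_n(t)$ is a morphism $\mathbb{P}^1\to(\mathbb{C}\mathrm{P}^2)^*$.

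The caustic $\Gamma_n$ is the envelope of the family $\ell_n(t)$, i.e.\ the curve whose dual is the image curve $\Gamma_n^*=\ell_n(\mathbb{P}^1)$. The envelope point is $\ell_n(t)\cap\ell_n'(t)$, which is again a rational function of $t$. So $\Gamma_n$ is the image of $\mathbb{P}^1$ under a rational map and has genus $0$; equivalently $\Gamma_n^*$ has genus $0$, and a curve and its dual are birational. One may also simply invoke the statement in the Plücker section that $g$ is the common genus of $\mathcal{C}$ and $\mathcal{C}^*$.

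The main obstacle is non-degeneracy: the map $t\mapsto\ell_n(t)$ must be nonconstant, and the family must not be a pencil through a single point. Otherwise the envelope degenerates to a point and $\Gamma_n$ is not a curve. For $\ell_0$ this is clear since $O\notin\gamma$. For general $n$ I would argue that reflection at a point of $\gamma$ is a bijection on lines through that point away from isotropic tangents. Hence a pencil of lines $\ell_n$ through a fixed point $Q$ can occur only in exceptional configurations: $O$ at a focus, or $O$ on an isotropic tangent, since then all $\ell_0$ pass through a focus or isotropic point. The hypotheses exclude these cases. If a general argument is hard, I would fall back on an explicit check at one value of $t$ that $\ell_n'(t)\neq0$ and that the envelope point moves with $t$. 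Since the real caustics of \cite{Bor-Tab} are genuine curves, degeneracy at least cannot happen for real data; this suffices for Theorem 3, where $n=1$.
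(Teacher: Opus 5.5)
Your proposal is correct and follows the paper's route. The paper also uses the correspondence $A\mapsto C_A$ from the rational curve $\gamma$ onto $\Gamma_n$; it concludes $g(\Gamma_n)=0$ via Riemann--Hurwitz, which here is equivalent to your appeal to L\"uroth, and your check via Proposition 1 and Vieta that the correspondence is rational in $t$ makes explicit a step the paper takes for granted, while the non-degeneracy you flag (that $\Gamma_n$ is a genuine curve) is simply presupposed in the paper too (``$\gamma$ is a ramified covering of $\Gamma_n$''), so your heuristic sketch of it is not a shortfall relative to the paper's proof.
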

\begin{proof}
    To each point $a~\in \gamma$ we associate the~corresponding point of the~caustic $C_a~\in \Gamma_n$; i.e.~the~$n$-th reflection of the~ray $OA$ is tangent to $\Gamma_n$ at $C_A$ and the~$n$-th reflections of close to $OA$ rays focus at $C_A$. By this construction $\gamma$ is a~ramified covering of $\Gamma_n$. Thus, the~Riemann-Hurwitz formula~gives

    $$0 = 2g(\gamma) = 2 + 2N(g(\Gamma_n) - 1) + \sum_{P \in \gamma}(e_P-1),$$

    where $N$ is the~degree of the~covering and $e_P$ is the~ramification index at $P$.

    As $2 + \sum_{P \in \gamma}(e_P-1) > 0$, the~only option for $g(\Gamma_n)$ is to be equal to zero.
\end{proof}

Let us note that the~correspondence described in the~proof is actually 1-to-1 correspondence up to self-intersections, as it is important in later subsections. 

Indeed, as a~tangent at any not self-intersection point $C \in \Gamma_n$ intersects $\gamma$ at most twice, there are no more than two points $a~\in \gamma$ such that $C = C_A$. Suppose that the~correspondence is two-to-one. Then for any point $a~\in \gamma$ there exists another point $A' \in \gamma$ such that $C_a~= C_{A'}$. Consequently, the~$n$-th reflection of $OA$ is a~tangent to $\Gamma_n$ at $C_A= C_{A'}$, and the~$n$-th reflection of this tangent is $OA'$. Therefore, $\Gamma_{2n}$ is exactly $O$ as all rays emanating from $O$ after $2n$~reflections pass through this point. But in such a~case we have a~contradiction with Theorem 1, which says that $\Gamma_{2n}$ has at least four ordinary cusps.

\subsection{Freedom of the~light source}

We continue with a~lemma~that allows us to disregard the~specific position of the~light source.

\begin{lemma}
    Consider an~elliptical billiard in $\gamma~\subset \mathbb{C}\mathrm{P}^2$ and light sources $O$ and $O'$. Let $\Gamma_1$ and $\Gamma_1'$ be the~first caustics by reflection corresponding to these light sources. If $O$ and $O'$ do not lie on the~isotropic tangents to $\gamma$ or $\gamma$ itself, then the~numerical invariants of the~curves $\Gamma_1$ and $\Gamma_1'$ appearing in the~Plücker formulas coincide.
\end{lemma}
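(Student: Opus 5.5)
The plan is a deformation argument in the light source. Let $U \subset \mathbb{C}\mathrm{P}^2$ be the complement of $\gamma$ and of its four isotropic tangents. The complement of a proper algebraic subset in $\mathbb{C}\mathrm{P}^2$ is connected, so $U$ is connected, and it suffices to show that the Plücker invariants of $\Gamma_1(O)$ are locally constant in $O \in U$.

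First I would make the family algebraic. Parametrize $\gamma$ rationally by $t \in \mathbb{C}\mathrm{P}^1$. For a point $A(t)\in\gamma$, the reflected line of $OA(t)$ is the image of $OA(t)$ under the symmetry in the tangent at $A(t)$. By Proposition 1 this is computed by the rule $z\mapsto \varepsilon^2/z$ on $\hat{\mathbb{C}}_\infty$, so its coordinates in the dual plane are polynomial in $t$ and in the coordinates of $O$. This gives a rational map $\phi_O:\mathbb{C}\mathrm{P}^1\to(\mathbb{C}\mathrm{P}^2)^*$ whose image is the dual curve $\Gamma_1^*$, and whose coefficients depend polynomially on $O$. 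The caustic $\Gamma_1$ is obtained as the envelope, namely $\phi_O\wedge\phi_O'$, which is again polynomial in $(t,O)$. The remark after Proposition 3 (the correspondence is birational) lets me treat $\phi_O$ as a birational parametrization. So $\Gamma_1$ and $\Gamma_1^*$ come as flat families of rational parametrized curves over $U$, once common factors are removed.

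Next I would show that $d$ and $d^*$ are constant on $U$. The degree of the parametrization drops only if the coordinate polynomials acquire a common zero. Such a zero would come from a base point: a value of $t$ where the reflection law degenerates (a point of isotropic contact, by Proposition 2), or where $OA(t)$ itself degenerates, meaning $O\in\gamma$. I would check directly that a common root in $t$ forces $O$ to lie on an isotropic tangent or on $\gamma$, so outside this set the degree is constant. With the genus fixed at $0$ by Proposition 3, the Plücker formulas give $\delta+\kappa$ and $\delta^*+\kappa^*$ as constants. Together with $d^*=d(d-1)-2\delta-3\kappa$, this determines $\delta$ and $\kappa$ individually, and dually $\delta^*$ and $\kappa^*$. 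Hence constancy of $d$ and $d^*$ plus genus $0$ fixes all seven invariants.

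The main obstacle is that the Plücker formulas as used in the paper count singularities with multiplicity, and the naive degree argument needs the family to be free of base points exactly on $U$. I expect the hardest part to be the explicit verification that degeneration of $\deg\phi_O$, or of the degree of the envelope parametrization, occurs only for $O$ on the isotropic tangents or on $\gamma$. To do this I would compute the resultant of the coordinate polynomials in $t$ as a polynomial in $O$ and identify its zero set with that excluded locus. If the resultant had extra components, I would argue semicontinuity: $d$ and $d^*$ are generically constant and can only drop on a closed set. I would then show this closed set is contained in the excluded locus by analyzing the limit of the reflected family at such $O$.
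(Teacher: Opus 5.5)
Your reduction is the same as the paper's. $U$ is connected, $g=0$ by Proposition 3, and once $d$ and $d^*$ are constant the Plücker formulas with $g=0$ determine $\kappa,\delta$ and, dually, $\kappa^*,\delta^*$. For $d^*$, your base-point argument is a sound alternative to the paper's reciprocity argument, which uses that the number of $A\in\gamma$ with $OA$ reflecting into $AO'$ is symmetric in $O,O'$. The naive formula for $\phi_O$ has a fixed simple base locus at the two points of $\gamma$ at infinity. After dividing it out, a further common zero appears only when $O\in\gamma$ or $O$ lies on an isotropic tangent. So, granting birationality (which you take from the paper's remark), $d^*$ is constant on $U$.

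The gap is in the treatment of $d$. The envelope map $\phi_O\wedge\phi_O'$ has formal degree $2d^*-2=10$. It does not vanish where the reflection law degenerates or where $O\in\gamma$. It vanishes exactly where the reflected line is stationary, $\phi_O'\propto\phi_O$, i.e.\ at branches of $\Gamma_1^*$ of multiplicity $m_b\ge 2$ (inflections of $\Gamma_1$), to order $m_b-1$. Hence $d=2d^*-2-\sum_b(m_b-1)$.

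Such zeros occur for every $O$. Let $\varepsilon$ and $u$ be the points at infinity of $T_{A(t)}\gamma$ and of $OA(t)$. At each of the four points of isotropic contact, the reflected line is the isotropic tangent, and it is stationary because the reflected direction $\varepsilon^2/u$ has a zero or pole of order at least two there. So your claim that a common root forces $O$ into the excluded locus is false for the envelope. Constancy of $d$ is really the assertion that the number of inflections of $\Gamma_1$ cannot jump inside $U$. Neither an unstructured resultant computation nor semicontinuity (which only says $d$ may drop on a closed set) supplies this.

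The missing idea is to differentiate $\phi_O(t)\cdot A(t)\equiv 0$. At a stationary point this gives $\phi_O(t)\cdot A'(t)=0$, so the reflected line must be the tangent $T_{A(t)}\gamma$. Besides the isotropic contact points, this happens only at the contact points of the two tangents from $O$. There $u'=0$ and $u=\varepsilon$, hence $(\varepsilon^2/u)'=2\varepsilon'\neq 0$, since $t\mapsto\varepsilon(t)$ is unramified at finite points. When $O$ lies on an asymptote, a direct computation at the contact point at infinity also shows non-stationarity. Checking that the four isotropic zeros are simple then gives $\sum_b(m_b-1)=4$ on all of $U$, hence $d=6$ throughout. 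Incidentally, this identifies the inflectional tangents of $\Gamma_1$ with the isotropic tangents of $\gamma$.

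The paper's ``small perturbation'' sentence for $d$ passes over this same point, but your write-up commits to a mechanism for the base points that is incorrect.
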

\begin{proof}
    Note that points $O$ and $O'$ that do not lie on isotropic tangents or $\gamma$ can always be connected by a~path that does not intersect isotropic tangents. Indeed, $\mathbb{C}\mathrm{P}^2$ is a~path-connected space, and the~isotropic tangents and the~ellipse form five subspaces of (real) codimension two, so removing them does not affect the~path-connectedness of the~space.

    Therefore, as we move along a~path from $O$ to $O'$, the~first caustics by reflection remain well-defined for all source points on the~path, since no family of outgoing rays includes an~isotropic tangent to the~ellipse.

    Both $\Gamma_1$ and $\Gamma_1'$ are of genus zero by Proposition 3. 
    
    A~small shift of the~light source results in a~small perturbation of points of the~caustic, therefore, the~degree of the~caustic stays the~same as it represents the~number of intersections of the~caustic with a~generic line. 

    The~dual degree also remains the~same after a~small shift of the~light source. Let us fix the~light source $\hat{O}$ and some close point $\hat{O'}$ outside the~isotropic tangents to $\gamma$. As the~dual degree $\hat{d^*}$ represents the~number of tangents to the~caustic passing through a~generic point, and tangents to the~caustic are reflections of rays issued from $\hat{O}$, the~number of $a~\in \gamma$ such that $\hat{O}A$ reflects at $A$ to $A\hat{O'}$ equals $\hat{d^*}$. Therefore, if we now take $\hat{O'}$ as a~light source, the~dual degree of the~corresponding caustic is exactly $\hat{d^*}$ as there are $\hat{d^*}$ reflections of rays emanating from $\hat{O'}$ passing through $\hat{O}$.
    
    By the~Plücker formulas, the~other four numerical characteristics do not change either.
\end{proof}

Lemma~1 makes it possible to prove statements concerning the~first caustic by reflection by~choosing a~convenient position of the~light source. The~proof of Theorem 3 is thus correct for light sources not located on the~isotropic tangents to the~ellipse, and this very condition defines the~position of the~light sources in Theorem 3 and in the~propositions of this section.

\subsection{Degree of the~curve dual to the~first caustic by reflection}

\begin{prop}
    For each ellipse $\gamma~\subset \mathbb{C}\mathrm{P}^2$ other than a~circle and a~light source $O\in \mathbb{C}\mathrm{P}^2$ not located on the~isotropic tangents to the~ellipse or the~ellipse itself the~'dual' degree of $\Gamma_1$ is~$d^* = 6$.
\end{prop}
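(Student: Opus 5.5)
The plan is to compute $d^*$ directly from its geometric meaning, as in the proof of Lemma~1. For a generic point $P\in\mathbb{C}\mathrm{P}^2$, $d^*$ is the number of tangents to $\Gamma_1$ through $P$. Every tangent to $\Gamma_1$ is the reflection at some $A\in\gamma$ of the line $OA$. So $d^*$ equals the number of points $A\in\gamma$ at which the lines $AO$ and $AP$ are symmetric with respect to the tangent $T_A\gamma$. This uses the one-to-one correspondence between $\gamma$ and $\Gamma_1$ noted after Proposition~3. By Lemma~1 we may choose $O$, and then $P$, generically among points off the isotropic tangents and off $\gamma$.

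The key step is to turn the symmetry condition into a polynomial equation. For that I use Proposition~1 in the isotropic coordinate $z$ on $\hat{\mathbb{C}}_\infty$. For a vector $w=(w_1,w_2)$ put $w^+=w_1+iw_2$ and $w^-=w_1-iw_2$, so that the direction of $w$ has coordinate $z=w^+/w^-$ (up to the normalisation sending $I_1,I_2$ to $0,\infty$). Let $A=(x,y)$, let $F=x^2/a^2+y^2/b^2-1$, and let $t=(-F_y,F_x)$ be the tangent direction, which is linear in $(x,y)$. By Proposition~1, $AO$ and $AP$ are symmetric with respect to $T_A\gamma$ exactly when $z_O z_P=\varepsilon^2$. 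Clearing denominators gives
$$\Phi(A):=(O-A)^+(P-A)^+\,(t^-)^2-(O-A)^-(P-A)^-\,(t^+)^2=0 .$$
This is an equation of degree $4$ in $(x,y)$, and after homogenisation it defines a quartic curve $Q$. By Bézout, $Q\cap\gamma$ consists of $8$ points counted with multiplicity, once we check that $\gamma\not\subset Q$. For that, a single point of $\gamma$ where $\Phi\neq0$ suffices, and a real point with a real non-symmetric choice of $O$ and $P$ gives one.

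Next I discard the spurious intersections. Let $A_\infty$ be one of the two points of $\gamma$ on the line at infinity; these are not isotropic because $\gamma$ is not a circle. The lines $A_\infty O$ and $A_\infty P$ both have direction $A_\infty$. The tangent at $A_\infty$ also passes through $A_\infty$, so its direction is $A_\infty$ as well. Hence $z_O=z_P=\varepsilon$, and the homogenised $\Phi$ vanishes there. In the homogeneous computation, the top-degree part of $\Phi$ is $(x^+)^2(t^-)^2-(x^-)^2(t^+)^2=\bigl(x^+t^- - x^-t^+\bigr)\bigl(x^+t^-+x^-t^+\bigr)$, and the first factor vanishes at the points at infinity of $\gamma$. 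These two intersections do not correspond to genuine finite reflections giving tangents of $\Gamma_1$ through a generic $P$. The remaining count is therefore $8-2=6$.

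I expect the main obstacle to be the multiplicity bookkeeping. First, each point at infinity of $\gamma$ must be an intersection of $Q$ and $\gamma$ of multiplicity exactly $1$, and no other spurious solutions may occur. The candidates to rule out are points of isotropic contact, where $t^+$ or $t^-$ vanishes, and points $A$ on isotropic lines through $O$ or $P$. At these the equation reduces to a single nonzero term for $O,P$ off the isotropic tangents, so they are not solutions. Second, for generic $P$ the six finite solutions must be distinct. I would settle the first point by a local expansion of $\Phi$ along $\gamma$ near $A_\infty$, in the chart $z=0$, checking that the derivative of $\Phi$ along $\gamma$ is nonzero there. For the second, it should suffice either to use a generic-$P$ argument, since tangents through a generic point are simple, or to check a convenient concrete configuration: $O$ at the centre, where the computation is explicit, is excluded only if it lies on an isotropic tangent, and it does not. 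Lemma~1 then transfers the result to all admissible $O$.
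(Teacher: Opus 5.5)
Your proposal is correct, and it takes a genuinely different route from the paper.

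\textbf{The paper's route.} The paper uses Lemma~1 to place $O$ on $\hat{\mathbb{C}}_\infty$, and it also takes the test point $B$ on $\hat{\mathbb{C}}_\infty$. Then Proposition~1 does all the counting. At a finite point of $\gamma$ the tangent must pass through $\pm\sqrt{\varepsilon\beta}$, which gives $2+2$ reflection points. The line at infinity, obtained as the limit of reflections at the two infinite points of $\gamma$, contributes $2$ more.

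\textbf{Your route.} You keep $O$ and $P$ in general position, encode $P\in r(A)$ as the quartic $\Phi$, and obtain $8-2$ from Bézout after removing the two base points $\gamma\cap\hat{\mathbb{C}}_\infty$.

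\textbf{Comparison.} The paper's version needs no computation. However, it depends on Lemma~1 and on a non-generic test point: the line at infinity is itself a bitangent of $\Gamma_1$, so its multiplicity discussion is delicate. Your version uses a genuinely generic $P$ and works for every admissible $O$ at once, so Lemma~1 is not needed.

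The price is the local check you flagged, and it does go through. One has $x^+t^--x^-t^+=-4i\,(x^2/a^2+y^2/b^2)$, which explains the vanishing at infinity. Restrict the homogenised $\Phi$ to $\gamma$ via $[x:y:z]=[a(u^2+1):-ib(u^2-1):2u]$. This gives a binary octic whose $u^8$ and $u^0$ coefficients vanish. Its $u^7$ and $u^1$ coefficients are nonzero constant multiples of $(a^2-b^2)\bigl(b(O_1+P_1)\mp ia(O_2+P_2)\bigr)$. Hence both base points are simple as soon as the midpoint of $OP$ avoids the two asymptotes. Both proofs really compute the degree of $A\mapsto r(A)$, and both rely equally on the birationality remark after Proposition~3 to identify it with $d^*$.

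\textbf{Correction on spurious roots.} Since $\Phi$ is affine in $P$, a finite $A\in\gamma$ is a base point only if $(O-A)^+(t^-)^2$ and $(O-A)^-(t^+)^2$ both vanish. That forces $A=O$ or $O$ on the isotropic tangent at $A$. A point $A$ on an isotropic line through $O$ can satisfy $\Phi(A)=0$, namely when $P$ lies on the other isotropic line through $A$. That is a genuine reflection by Proposition~1 ($0\leftrightarrow\infty$), not a spurious one, and it is avoided by generic $P$ anyway. Distinctness of the six roots is also unnecessary, since $d^*$ is a count with multiplicity.
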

\begin{proof}
    By Lemma~1, we can consider any particular light source not lying on the~isotropic tangents or the~ellipse since the~numerical invariants remain constant for all other such sources.

    Let us choose a~light source $O$ on the~line at infinity located at a~point other than the~isotropic ones and such that $O \notin \gamma$. Let us introduce an~affine coordinate $z$ on the~line at infinity such that the~isotropic points correspond to $z = 0$ and $z = \infty$. Suppose that the~coordinate of $O$ equals $\varepsilon$.

    Recall that $d^*$ is the~degree of the~curve dual to $\Gamma_1$, i.e. the~number of intersection points with the~generic line (counted with multiplicities); in terms of the~original curve $\Gamma_1$, $d^*$ is the~number of tangents to $\Gamma_1$ passing through the~point of general position (again, counted with multiplicities). Tangents to $\Gamma_1$ are the~first reflection lines, since $\Gamma_1$ is, by definition, an~envelope of the~first reflections. Thus, in order to find $d^*$, it suffices to determine the~number of reflected lines passing through a~generic point on the~plane.

    Consider some point $B$, other than an~isotropic one, on the~line at infinity. Let $B$ have the~coordinate $\beta$. Which rays emanating from $O$ can be reflected to a~line passing through $B$? 

    If the~initial ray intersects an~ellipse at the~point which does not belong to the~line at infinity, in order for the~reflected ray to pass through $B$, the~tangent to the~ellipse at the~point of reflection must intersect the~line at infinity either at point $A$ with the~coordinate $\sqrt{\varepsilon\beta}$, or at point $A'$ with the~coordinate $-\sqrt{\varepsilon\beta}$ by Proposition 1. There are four such points on the~ellipse as there are two tangents to the~ellipse through $A$ and two tangents through $A'$. Thus, we obtain four reflected rays passing through $B$.

    If the~initial ray belongs to the~line at infinity, and the~point of reflection is, consequently, infinite, let us look at the~limit of close reflections. As $\gamma$ is not a~circle, points of intersections of $\gamma$ with the~line at infinity are not isotropic. Suppose the~coordinate of any of these two points is $\alpha$, note that $\alpha~\neq \varepsilon$ as $O \notin \gamma$. We can construct a~sequence of reflections of lines from $O$ in close points of the~ellipse with tangents intersecting the~line at infinity at $\alpha~+ n^{-1}$. By Proposition 1 reflections in these points intersect the~line at infinity at $\cfrac{(\alpha~+ n^{-1})^2}{\varepsilon}$. Thus, the~limit of reflected lines as $n \to \infty$ passes through the~points at infinity $\alpha$ and $\cfrac{\alpha^2}{\varepsilon} \neq \alpha$, i.e. the~limit of reflections is the~line at infinity, which surely passes through $B$. It gives us two more reflections passing through $B$.

    Let us now discuss the~question of multiplicities. Let $B^*$ be the~line in the~dual space. If it is intersects $\Gamma_1^*$ at point $l^*$ with the~multiplicity higher than 1, then it either means that $B^*$ is at~least tangent to $\Gamma_1^*$ at $l^*$ or that $l^*$ is a~singular point. Note that these options can combine. The~first case corresponds to $B \in \Gamma_1$ in the~original space, with $l$ which is at least tangent to $\Gamma_1$ at $B$. The~second case corresponds to $B \in l$ with $l$ being a~bitangent or a~tangent at an~inflection point. Consequently, if we choose our point $B$ in such a~way that it does not belong to the~first caustic and does not correspond to the~directions of bitangents or tangents at inflection points, all multiplicities equal 1.

    Thus, we conclude that $d^* = 6$ for $\Gamma_1$.
\end{proof}

\subsection{the~hunt for bitangents}

In order for Plucker's formulas to give an~accurate result, we need to evaluate another invariant. Continuing the~previous subsection, we again study an~invariant related rather to a~curve dual to the~first caustic - we try to catch the~bitangents to it.

\begin{prop}
    For each ellipse $\gamma~\subset \mathbb{C}\mathrm{P}^2$ other than a~circle and a~light source $O\in \mathbb{C}\mathrm{P}^2$ not located on the~isotropic tangents to the~ellipse or the~ellipse itself the~number of bitangents of~$\Gamma_1$ is $\delta^* \le 6$.
\end{prop}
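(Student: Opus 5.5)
Following the pattern of Proposition 4, I will use Lemma~1 to fix a convenient light source: $O$ on the line at infinity with affine coordinate $\varepsilon \neq 0,\infty$, chosen off $\gamma$ and generic otherwise. A bitangent of $\Gamma_1$ is a line $l$ tangent to $\Gamma_1$ at two distinct points. Tangents to $\Gamma_1$ are exactly the reflected lines, and by the remark after Proposition~3 the correspondence $A\mapsto C_A$ is one-to-one away from self-intersections. So a bitangent is a line $l$ that arises as the reflection of two rays $OA$ and $OA'$ with $A\neq A'$. Equivalently, the reflection map $\rho:\gamma\to\Gamma_1^*$ sending $A$ to the reflected line at $A$ identifies two distinct points. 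The task is therefore to count unordered pairs $\{A,A'\}\subset\gamma$ with $\rho(A)=\rho(A')$.

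\textbf{Reduction to the line at infinity.} Since $l$ passes through both $A$ and $A'$, and a line meets $\gamma$ in at most two points, $l$ is the chord $AA'$. It meets $\hat{\mathbb{C}}_\infty$ at a point $\beta$. By Proposition~1, the tangents to $\gamma$ at $A$ and at $A'$ must meet the line at infinity at points with coordinates $\tau$ and $\tau'$ satisfying $\tau^2=\varepsilon\beta=\tau'^2$. So either $\tau=\tau'$ or $\tau=-\tau'$.

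\textbf{Case $\tau=\tau'$.} Here $A$ and $A'$ are the two tangency points from one point $T\in\hat{\mathbb{C}}_\infty$. The chord $AA'$ is then the polar of $T$, and this polar passes through the pole of the line at infinity, namely the center of $\gamma$. The condition becomes that the polar of $T$ meets infinity at $\beta=\tau^2/\varepsilon$. The direction of the polar of $T$ depends on $\tau$ through a Möbius map $\tau\mapsto \mu(\tau)$ (the conjugate diameter direction), so we get the equation $\mu(\tau)=\tau^2/\varepsilon$. Clearing denominators gives a polynomial equation of degree at most $3$ in $\tau$.

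\textbf{Case $\tau=-\tau'$.} The two tangent points at infinity are $T$ and $-T$, which are related by the involution $z\mapsto -z$ fixing the isotropic points. I will parametrize this condition algebraically, for instance by writing $\gamma$ as $x^2/a^2+y^2/b^2=1$ and parametrizing points of $\gamma$ rationally by $t$. In that parametrization, the direction of the tangent in the isotropic coordinate is a rational function of $t$ of degree $2$. I then impose that the chord $AA'$ has direction $\tau^2/\varepsilon$. This gives a system in $(t,t')$, and I will count its solutions by Bézout after removing the diagonal $t=t'$ and symmetrizing in the pair. I expect a bound of at most $3$ from this case too, giving $\delta^*\le 6$. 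I also need to handle the infinite reflection points separately, as in Proposition~4: there the reflected line is the line at infinity itself, which contributes at most one candidate.

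\textbf{Main obstacle.} The hardest step is the degree count in the second case. One must correctly discard the spurious diagonal solutions and the solutions at isotropic points, and make sure none are double-counted. Only an upper bound is needed, so it suffices to exhibit a polynomial of the right degree that vanishes on all bitangent parameters without being identically zero. To confirm it is not identically zero, I would check a specific value of $\varepsilon$.
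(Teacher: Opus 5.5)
\textbf{There is a genuine gap: the case $\tau'=-\tau$ is left as an expectation, and the expected bound would not give $\delta^*\le 6$.}

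Your reduction is sound, and in part cleaner than the paper's. A line reflected at two distinct points $A\neq A'$ must be the chord $AA'$. Proposition~1 then gives $\tau'=\pm\tau$. For $\tau'=\tau$ the chord is a diameter, so the cubic $\mu(\tau)=\tau^2/\varepsilon$ bounds these bitangents by $3$; its leading coefficient vanishes only if $\mu$ fixes an isotropic point, i.e.\ only for a circle. This replaces the paper's count of lines through the centre via $d^*=6$.

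The problem is the case $\tau'=-\tau$, which carries the real content of the proposition. The arithmetic does not close. You rightly treat the line at infinity as a separate candidate, since Proposition~1 does not apply at the points of $\gamma$ at infinity. So ``at most $3$'' in the second case gives $1+3+3=7$, not $6$. And $\delta^*\le 7$ is not enough for Theorem~3: with $\delta^*+\kappa^*=10$ and $\kappa=12-2\kappa^*$ it still allows $\kappa^*=3$, $\kappa=6$. A raw B\'ezout elimination will not settle it either, because it overcounts here. For instance, in the eccentric-angle parameter $u=e^{2it}$ the natural equation for this case has degree $6$, but only two roots are genuine; the other four are double roots at the two values of $u$ corresponding to points of isotropic contact. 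You must prove that this case contributes at most $2$, and that needs structural input.

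The missing idea is that here $A,A',-A,-A'$ is a parallelogram billiard orbit in which the sides $A(-A')$ and $A'(-A)$ are the lines $OA$ and $OA'$. The paper gets this from the confocal caustic:
\begin{itemize}
\item Let $B,B'$ be the second intersections of $OA$, $OA'$ with $\gamma$. Then $B,A,A',B'$ is a billiard trajectory, so $BA$, $AA'$, $A'B'$ all touch one conic $\gamma_\lambda$.
\item The parallel lines $BA\neq A'B'$ are then the two tangents to $\gamma_\lambda$ from $O$, hence centrally symmetric.
\item Since $A'\neq -A$ (the tangents at $A$ and $A'$ are not parallel when $\tau'=-\tau$), this forces $B=-A'$ and $B'=-A$.
\end{itemize}
In your notation this finishes the count at once. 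The directions $A+A'$ and $A'-A$ are conjugate, so $\beta=\mu(\varepsilon)$ and $\tau^2=\varepsilon\mu(\varepsilon)$. This determines $\pm\tau$, hence the four points $\pm A,\pm A'$, hence a single parallelogram. Only its two sides parallel to $\mu(\varepsilon)$ can be bitangents. (The paper instead uses a pencil-of-conics computation to show such a parallelogram must be circumscribed about $\gamma_\lambda$ with $\lambda=-a^2b^2/(a^2+b^2)$, which again leaves one parallelogram.) The total is then $1+3+2=6$.
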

\begin{proof}
As in the~proof of Proposition 4, we choose a~light source $O$ on the~line at infinity in such a~way that it is not the~isotropic point or the~point of intersection of $\gamma$ with infinity.

We use again an~affine coordinate $z$ on the~line at infinity such that the~isotropic points correspond to $z = 0$ and $z = \infty$, and the~coordinate of $O$ equals $\epsilon$.

Our goal is to estimate the~number of bitangents to the~first caustic by reflection $\Gamma_1$. As we said before, the~caustic is an~envelope of the~first reflections, thus any tangent is the~first reflection of the~ray from $O$ at some point of $\gamma$. Therefore, bitangents correspond to situations where reflections at different points of $\gamma$ coincide. Let us investigate how it can happen. 

\vspace{5mm}

\textbf{Case I.} Consider two rays issued from $O$ such that they lie on the~same line $l$ which is not tangent to $\gamma$. To obtain a~bitangent, the~rays have to reflect into the~same line at the~two points of intersection of $l$ with $\gamma$. Consequently, both rays have to reflect into $l$ itself.

\vspace{4mm}

\hspace{4mm} \textbf{I.I.} If $l$ is the~line at infinity, both reflections are, indeed, the~line at infinity, see the~proof of~Proposition 4. 

\vspace{4mm}

\hspace{4mm} \textbf{I.II.} If $l$ is a~finite line, it has to be a~normal line at both points of intersections with $\gamma$, as it is the~only way to be reflected into itself being non-tangent. The~only such lines are the~axes of $\gamma$. We cannot be sure that $O$ lies at the~intersection of one of the~axes with infinity, but we can note that if it is the~case, then this particular bitangent passes through the~center of $\gamma$. 

\vspace{4mm}

\textbf{Case II.} Consider two rays issued from $O$ suсh that they do not lie on the~same line $l$.

Since we are concerned with bitangents, we can immediately exclude configurations that are obviously irrelevant.
\begin{itemize}
    \item If one of the~rays belongs to the~line at infinity, the~other does not, as the~rays cannot lie on the~same line. Therefore, one of the~reflections is the~line at infinity, while the~other is some finite line. They clearly do not coincide.
    \item If one of the~rays belongs to a~line tangent to $\gamma$, it is non-isotropic, and its reflection is this tangent line. It cannot coincide with the~reflection at any other point as it intersects $\gamma$ in the~only point.
    \item A~bitangent cannot pass through any point of isotropic contact as at them all rays reflect into isotropic tangents (the~latter is true as we choose $O$ outside the~isotropic tangents).
\end{itemize}

Thus, we can 'catch' a~bitangent only in cases where both rays belong to finite non-tangent lines such that for each line at least one point of its intersection with $\gamma$ is not a~point of isotropic tangency.

Let us denote the~points of intersection of the~first ray with $\gamma$ as $A$ and $B$, of the~second ray with $\gamma$ - as $C$ and $D$. As all points are finite, the~intersections of tangents at these points with the~line at infinity are well-defined. Let us denote the~coordinates of these intersections as $a$, $b$, $c$ and $d$.

Suppose that $BC$ is both the~reflection of $AB$ at $B$ and $DC$ at $C$, i.e. $BC$ is a~bitangent to the~first caustic by reflection. It means that $(\dots,A,B,C,D,\dots)$ is a~billiard trajectory at $\gamma$. Thus, there is the~confocal to $\gamma$ conic $\gamma_\lambda$ such that $AB$, $BC$ and $CD$ are all tangent to $\gamma_\lambda$. 

As $AB$ and $CD$ are parallel (they intersect at the~infinity) and are tangent to the~same conic, they are symmetric with respect to the~shared centre of $\gamma$ and $\gamma_\lambda$. Therefore, the~unordered pair of points $(A, B)$ is symmetric to the~unordered pair $(C, D)$.

\vspace{4mm}

\hspace{4mm} \textbf{II.I.} Suppose $B = -C$, $a~= -D$. It also means that $b = c$, $a~= d$, as tangents at symmetric points are parallel.

In this case we again note that the~bitangent passes through the~center of $\gamma$, as it passes through symmetric points $B$ and $C$. And also, even though we cannot know for sure whether $AD$ is a~bitangent or not, we surely know that if it is, it passes through the~center of $\gamma$ too.

\vspace{4mm}

\hspace{4mm} \textbf{II.I.} Suppose $B = -D$, $a~= -C$. It also means that $b = d$, $a~= c$.

This case is more intriguing. As $BC$ is a~reflection of both $AB$ and $CD$, we can say that $\cfrac{b^2}{\varepsilon} = \cfrac{c^2}{\varepsilon}$ by Proposition 1. $b \neq c$ because there are no more than two tangents with the~same direction to $\gamma$. Thus, we conclude that $d = b = -c = -a$.

But then at $A$ $AB$ is reflected into a~line $l_A$ which is tangent to $\gamma_\lambda$ and intersects the~line at infinity at $\cfrac{a^2}{\varepsilon}$. Similarly, $CD$ is reflected at $D$ into a~line $l_D$ which is tangent to $\gamma_\lambda$ and intersects the~line at infinity at $\cfrac{d^2}{\varepsilon}$. As $d = b = -c = -a$, we get that $BC$, $l_A$ and $l_D$ are parallel and all three are tangent to $\gamma_\lambda$. It can be possible only if $l_a~= l_D$.

We conclude that in this setting both $BC$ and $AD$ are bitangents. Note that these bitangents do not pass through the~centre of $\gamma$.

\vspace{5mm}

Finally, we see that there are three possible types of bitangents:
\begin{enumerate}
    \item the~line at infinity;
    \item a~finite line passing through the~center of $\gamma$;
    \item the~second pair of parallel sides of a~parallelogram inscribed in $\gamma$ and described near the~conic confocal to $\gamma$ with the~first pair of parallel sides directed to $O$.
\end{enumerate}

There is one line of the~first type. By Proposition 4, we have $d^* = 6$, and thus there are six reflections passing through the~centre of $\gamma$. Therefore, there are no more than three bitangents of the~second type.

It remains to show that there are no more than two bitangents of the~third type, i.e. there is at most one $\gamma_\lambda$ such that we can find a~parallelogram described above.

Without loss of generality, let us assume that $\gamma$ is given by the~equation $$\frac{x^2}{a^2} + \frac{y^2}{b^2} = z^2$$

Any confocal conic $\gamma_\lambda$ corresponds to the~equation $$\frac{x^2}{a^2+\lambda} + \frac{y^2}{b^2+\lambda} = z^2$$

Let the~point $O = [v_x:v_y:0]$. We normalise the~vector $(v_x, v_y)$ so that $v_x^2 + v_y^2 = 1$. This is possible, since $O$ is not isotropic. Any finite line passing through $O$ has the~equation $v_y\cdot x - v_x\cdot y + m \cdot z = 0$, where $m$ is some constant. 

Tangents to $\gamma_\lambda$ passing through $O$ are finite as the~line at infinity intersects $\gamma_\lambda$ in two points, so to find their equations it remains to find appropriate $m$. As the~only point at infinity is $O$ we can consider $z = 1$.

$$v_y \cdot x - v_x \cdot y + m = 0 \Rightarrow v_x \cdot y = v_y\cdot x  + m$$

Consider $v_x \neq 0$. The~line $v_x \cdot y = v_y\cdot x  + m$ is tangent to $\gamma_\lambda$ at a~finite point if and only if $\cfrac{v_x^2x^2}{a^2+\lambda} + \cfrac{(v_yx+m)^2}{b^2+\lambda} = v_x^2$ has the~only solution. Let us set $a^2+\lambda~= a_\lambda$, $b^2+\lambda~= b_\lambda$ and find a~discriminant:

$$ D = \left( \frac{2v_ym}{b_\lambda} \right)^2 - 4 \left( \frac{v_x^2}{a_\lambda} + \frac{v_y^2}{b_\lambda}\right) \left(\frac{m^2}{b_\lambda} - v_x^2 \right) = 4 \left(  \frac{v_x^4}{a_\lambda} + \frac{v_x^2v_y^2}{b_\lambda} - \frac{v_x^2m^2}{a_\lambda~b_\lambda}\right) = \frac{4v_x^2}{a_\lambda~b_\lambda} (v_x^2 b_\lambda~+ v_y^2 a_\lambda~- m^2)$$

Thus, as $v_x \neq 0$,

$$D = 0 \iff m^2 = v_x^2 b_\lambda~+ v_y^2 a_\lambda~= v_x^2(b^2+\lambda) + v_y^2(a^2+\lambda) = v_x^2 b^2 + v_y^2 a^2+\lambda$$

In case $v_x = 0$, we get $O = [0:1:0]$, i.e. a~'vertical' direction. Thus, the~points of tangency are the~vertices of $\gamma_\lambda$, namely, $[\pm\sqrt{a^2+\lambda}:0:1]$. Therefore, 
$$m^2 = a^2+\lambda~= 0\cdot b^2 + 1 \cdot a^2 + \lambda,$$

which is exactly the~same expression as in case $v_x \neq 0$.

We conclude that two tangents to $\gamma_\lambda$ from $O$ are given by the~equations

$$v_y \cdot x - v_x \cdot y \pm \sqrt{v_y^2 a^2 + v_x^2 b^2 + \lambda}\cdot z = 0.$$

As $O \neq \gamma$ and $O$ is not an~isotropic point at infinity, one tangent intersects $\gamma$ at finite points $A$ and $B$, the~other one intersects it at finite points $C$ and $D$, and all four points are distinct. As $AB$ and $CD$ are symmetric with respect to the~centre of $\gamma$, the~points form two pairs of symmetric points. We choose the~names in such a~way that $C = -A$ and $D = -B$.

It is clear that $BC // AD$ because of the~symmetry, so our goal is to check whether $BC$ and $AD$ are tangent to $\gamma_\lambda$. If it is the~case, $ABCD$ is a~desired parallelogram, and $BC$ and $AD$ are bitangents to the~first caustic by reflection.

Let us denote 
$$l_{AB} = v_y \cdot x - v_x \cdot y + \sqrt{v_y^2 a^2 + v_x^2 b^2 + \lambda}\cdot z,$$
$$l_{CD} = v_y \cdot x - v_x \cdot y -\sqrt{v_y^2 a^2 + v_x^2 b^2 + \lambda}\cdot z,$$ 
and
$$Q_\gamma~= \frac{x^2}{a^2}+ \frac{y^2}{b^2} - z^2.$$
Consider a~pencil of conics $\mathcal{C}(k): Q_\gamma~+ k(l_{AB}l_{CD}) = 0$ with $k \in \mathbb{C} \cup \{ \infty \}$. As $A, B, C, D$ are the~basic points of this pencil, $BC \cup AD$ is one of the~degenerate conics in the~pencil. Let us find the~corresponding value of $k$.

The~matrix of $\mathcal{C}(k)$ is given by

$$Q_{\mathcal{C}(k)} = 
\begin{pmatrix} 
\frac{1}{a^2} + kv_y^2 & -k v_x v_y & 0 \\ 
-k v_x v_y & \frac{1}{b^2} + k v_x^2 & 0 \\ 
0 & 0 & -1 - k m^2 
\end{pmatrix}$$

where $m^2 = v_y^2 a^2 + v_x^2 b^2 + \lambda$.

The~degenerate conics correspond to $\det Q_{\mathcal{C}(k)} = 0$. We calculate:

$$\det Q_{\mathcal{C}(k)} = (-1 - km^2) \left( \left(\frac{1}{a^2} + kv_y^2\right) \left( \frac{1}{b^2} + kv_x^2 \right) - k^2v_x^2v_y^2\right) = \frac{1}{a^2b^2}(-1-km^2)(1 + k(v_y^2 a^2 + v_x^2 b^2))$$

As we chose $O$ in such a~way that it is not the~intersection of $\gamma$ and the~line at infinity, \\ i.e. $O \neq [a: \pm ib:0]$, we have $v_y^2 a^2 + v_x^2 b^2 \neq 0$. 

Thus, $\det \mathcal{C}(k) =0$ with $k_1 = -\cfrac{1}{m^2} = -\cfrac{1}{v_y^2 a^2 + v_x^2 b^2 +\lambda}$ or $k_2 = -\cfrac{1}{v_y^2 a^2 + v_x^2 b^2}$. The~value~$k_1$ corresponds to the~degenerate conic without $z$-terms, i.e. to the~degenerate conic passing through $[0:0:1]$, which is $AC \cup BD$. Therefore, $BC \cup AD$ corresponds to $k_2 = -\cfrac{1}{v_y^2 a^2 + v_x^2 b^2}$.

Let us find the~explicit equation for $BC \cup AD = \mathcal{C}(k_2)$:

$$\frac{x^2}{a^2} + \frac{y^2}{b^2} - z^2 -\frac{1}{v_y^2 a^2 + v_x^2 b^2} ((v_y  x - v_x  y)^2 - (v_y^2 a^2 + v_x^2 b^2 +\lambda)z^2) = 0,$$

$$\frac{x^2}{a^2} + \frac{y^2}{b^2} -\frac{1}{v_y^2 a^2 + v_x^2 b^2} ((v_y  x - v_x  y)^2 - \lambda~z^2) = 0,$$

$$\frac{(v_y^2 a^2 + v_x^2 b^2)x^2}{a^2} + \frac{(v_y^2 a^2 + v_x^2 b^2)y^2}{b^2} -(v_y  x - v_x  y)^2 + \lambda~z^2 = 0,$$

$$\frac{v_x^2b^2 x^2}{a^2} + \frac{v_y^2a^2y^2}{b^2} + 2v_xv_y  xy + \lambda~z^2 = \left(\frac{v_xbx}{a} + \frac{v_yay}{b}\right)^2 + \lambda~z^2= 0.$$

So all we need to do now is to check whether lines $\cfrac{v_xb}{a} x+ \cfrac{v_ya}{b} y\pm \sqrt{-\lambda} z = 0$ are tangent to~$\gamma_\lambda: \cfrac{x^2}{a^2 +\lambda} + \cfrac{y^2}{b^2 + \lambda} - z^2 = 0$. As we have seen above, it is necessary that the~coefficients satisfy the~relation:

$$(\pm\sqrt{-\lambda}) ^2= \left(\cfrac{v_xb}{a} \right)^2 \cdot (a^2 + \lambda) + \left( \cfrac{v_ya}{b} \right)^2 \cdot (b^2 + \lambda).$$

Let us simplify:

$$-\lambda~= \cfrac{v_x^2b^2}{a^2} \cdot (a^2 + \lambda) +  \cfrac{v_y^2a^2}{b^2}  \cdot (b^2 + \lambda) = v_y^2a^2 + v_x^2b^2 + \lambda~\left( \cfrac{v_x^2b^2}{a^2} + \cfrac{v_y^2a^2}{b^2} \right),$$

\begin{align*}
-(v_y^2a^2 + v_x^2b^2) 
&= \lambda~\left( \cfrac{v_x^2b^2}{a^2} + \cfrac{v_y^2a^2}{b^2} + 1\right) = \\
&=
\lambda~\left( \cfrac{v_x^2b^2}{a^2} + \cfrac{v_y^2a^2}{b^2} + v_x^2 + v_y^2 \right) = \\
&= \lambda\left( \cfrac{v_x^2(a^2+b^2)}{a^2} + \cfrac{v_y^2(a^2+b^2)}{b^2}\right) = \lambda~\cfrac{(v_y^2a^2+v_x^2b^2)(a^2+b^2)}{a^2b^2}.
\end{align*}

As we have noted above, $v_y^2a^2+v_x^2b^2 \neq 0$, consequently we get the~only appropriate value of $\lambda$:

$$\lambda~= -\cfrac{a^2b^2}{a^2 + b^2}$$

Moreover, as this value does not depend on $O$, any parallelogram orbit in $\gamma$ is necessary circumscribed about $\gamma_\lambda$ with $\lambda~= -\cfrac{a^2b^2}{a^2 + b^2}$.

We conclude that there is exactly one $\gamma_\lambda$ for which we can find a~parallelogram orbit, and therefore there are exactly two bitangents of the~third type. It ends the~proof.

\end{proof}

\subsection{Final proofs}

Now we are ready to present the~final proofs of Theorem 3 and Corollary 1.

\begin{proof}[\textbf{Proof of Theorem 3}]
    It is known from Propositions 3-5 that for an~elliptical billiard and a~light source not located on the~isotropic tangents to the~ellipse or the~ellipse itself, the~first caustic by reflection $\Gamma_1\subset \mathbb{C}\mathrm{P}^2$ has genus $g=0$ and the~'dual' degree $d^* =6$, and the~number of bitangents is $\delta^* \leqslant 6$.

    By Theorem 2, proven in \cite{Bor-Sp-Tab}, $\kappa\geqslant 4$ for $\Gamma_1$ in the~case of real light sources. Applying Lemma~1, we obtain that the~result holds in the~general complex case under consideration.

    Let us substitute the~obtained values into the~Plücker formulas:

    \begin{enumerate}
        \item $6=d(d-1)-2\delta~- 3\kappa$
        \item $d=6\cdot5-2\delta^*-3\kappa^*$
        \item $\kappa^*=3d(d-2)-6\delta~-8\kappa$ 
        \item $\kappa~=3\cdot6\cdot4-6\delta^*-8\kappa^*$
        \item $0=\cfrac{1}{2}(d-1)(d-2)-\delta~-\kappa$ 
        \item $0=\cfrac{1}{2}\cdot5\cdot4-\delta^*-\kappa^*$ 
    \end{enumerate}

    The~sixth equality implies that $\delta^*+\kappa^* = 10$. We substitute these values into the~second and fourth relations.

    \hspace{4mm} 2. $d=6\cdot5-2(\delta^*+\kappa^*)-\kappa^* = 30 - 20 - \kappa^* = 10 - \kappa^*;$
    
    \hspace{4mm} 4. $\kappa~=3\cdot6\cdot4-6\delta^*-8\kappa^* = 3\cdot6\cdot4 -6(\delta^*+\kappa^*)-2\kappa^* = 72 - 60 - 2\kappa^* = 12 - 2\kappa^*.$    

    As $\delta^* \leqslant 6$, it follows from the~equality $\delta^*+\kappa^* = 10$ that $\kappa^* \geqslant 4$. Consequently we get that $\kappa~\geqslant 4$, $\kappa^* \geqslant 4$ and $\kappa~+ 2\kappa^* = 12$. It is possible only if $\kappa~= \kappa^* = 4$.

    Thus, we immediately get $\delta^* = d = 10 - \kappa^* = 10-4 = 6$, and the~fifth equation gives $\delta~= 6$.
\end{proof}

\begin{proof}[\textbf{Proof of Corollary 1}]
    Suppose $\gamma$ is an~ellipse but not a~circle. Focuses of the~ellipse are the~only intersections of isotropic tangents to the~ellipse and the~real plane. Therefore, by Theorem 3, there are four ordinary cusps on $\Gamma_1\subset \mathbb{C}\mathrm{P}^2$, but by Theorem 1 all these cusps belong to a~real part of the~caustic.

    In case $\gamma$ is a~circle it was shown by Cayley in \cite{Cay} that there are exactly four real cusps on the~first caustic by reflection if the~light source lies inside $\gamma$ but not in its centre.
\end{proof}


\section{Parallelogram billiard orbits in the~ellipse}

In this short section we discuss the~last part of the~proof of Proposition 5 as it is quite intriguing by itself.

As a~result of our calculations in this proof we can formulate an~independent theorem.

\textbf{Theorem 4}
\textit{Consider a~complex 4-periodic orbit $(A,B,C,D)$ in the~non-circular ellipse \\ $\gamma: \cfrac{x^2}{a^2} + \cfrac{y^2}{b^2} = z^2$ in $\mathbb{C}\mathrm{P}^2$. $ABCD$ is a~parallelogram iff $ABCD$ is circumscribed \\ about $\gamma_\lambda: \cfrac{x^2}{a^2 + \lambda} + \cfrac{y^2}{b^2 + \lambda} = z^2$ with $\lambda~= -\cfrac{a^2b^2}{a^2 + b^2}$.}

Corentin Fierobe brilliantly studied the~connection between $n$-periodic orbits and corresponding to them values of $\lambda$-s (see \cite{Fier-caustics}). In particular, in Section 7 of this article it is said that any complex 4-periodic orbit which do not have its edges on a~foci line is circumscribed about~$\gamma_\lambda$ with $\lambda~\in \{ -\cfrac{a^2b^2}{a^2 - b^2}, -\cfrac{a^2b^2}{a^2 + b^2}, \cfrac{a^2b^2}{a^2 - b^2} \}$. Moreover, any complex orbit inscribed in $\gamma$ and circumscribed about a~$\gamma_\lambda$ with $\lambda$ from the~list above is a~quadrilateral orbit.

Thus, our result is consistent with the~existing one. It is quite peculiar that one value out of three allows parallelogram orbits and parallelogram orbits only.

\end{document}